\providecommand{\abs}[1]{\lvert#1\rvert}
\providecommand{\norm}[1]{\lVert#1\rVert}
\newtheorem{theoreme}{Theorem}[section]
\newtheorem{proposition}[theoreme]{Proposition}
\newtheorem{definition}[theoreme]{Definition}
\newtheorem{remarque}[theoreme]{Remark}
\numberwithin{equation}{section}
\title{Bilinear optimal control for chemotaxis model: The case of two-sidedly degenerate  diffusion with Volume-Filling Effect}
\author{ Georges Chamoun$^{2}$, Mazen Saad$^1$, Toni Sayah$^{3}$ and Sarah Serhal$^{1,3}$  }
\date{}
\begin{document}
\maketitle
\begin{center}
$^{1}$ Ecole Centrale de Nantes, 
Laboratoire de Math\'ematiques Jean Leray, UMR CNRS 6629, 1 rue de la No\'e, 44321 Nantes, France. \,  sarah.serhal@ec-nantes.fr\,, mazen.saad@ec-nantes.fr\\

$^{2}$ Faculty of engineering (ESIB), Saint Joseph University  of Beirut, Lebanon.
georges.chamoun1@usj.edu.lb\\

$^{3}$ Laboratoire de Mathématiques et Applications, Unité de recherche Mathématiques et
Modélisation, CAR, Faculté des Sciences, Université Saint-Joseph de Beyrouth, B.P 11-514 Riad El Solh, Beyrouth 1107 2050, Lebanon.\\
sarah.serhal@net.usj.edu.lb\,, toni.sayah@usj.edu.lb

\end{center}

\begin{abstract}

\noindent In this paper, we study an optimal control problem for a coupled non-linear system of reaction-diffusion equations with degenerate diffusion, consisting of two partial differential equations representing the density of cells and the concentration of the chemotactic agent. By controlling the concentration of the chemical substrates, this study can guide the optimal growth of cells. The novelty of this work lies on the direct and dual models that remain in a weak setting, which is uncommon in the recent literature for solving optimal control systems. Moreover, it is known that the adjoint problems offer a powerful approach to quantifying the uncertainty associated with model inputs. However, these systems typically lack closed-form solutions, making it challenging to obtain weak solutions. For that, the well-posedness of the direct problem is first well guaranteed. Then, the existence of an optimal control and the first-order optimality conditions are established. Finally, weak solutions for the adjoint system to the non-linear degenerate direct model, are introduced and investigated.
\end{abstract}

\begin{quotation}

\noindent \textbf{Keywords:} Chemotaxis, degenerate parabolic problem, global existence of weak solutions, optimal control, first-order optimality conditions and adjoint problem.

\section{Introduction}
Chemotaxis describes the directed movement of organisms in response to chemical gradients. Keller and Segel, introduced in  \cite{keller1970initiation}, a mathematical model that explains the chemotactic aggregation of cellular slime molds. These molds have a tendency to migrate to areas with higher concentrations of chemicals, particularly those secreted by amoebae, a phenomenon known as chemo-attraction with production. In contrast, chemo-repulsion occurs when high chemical concentrations repel organisms.\\

\noindent Bilinear control problems are a special type of control problem in which a non-linear term results from the multiplication of control and state variables, restricted in a control area. It offers a robust and practical method for managing the concentration of substances. Essentially, the control acts as a coefficient of a reaction term that is linearly dependent on the state. In this work, we study an optimal control issue in a system impacted by chemo-attraction that includes a non-linear diffusion factor. To obtain a desired density of cells and concentration of chemicals, this control term affects the injection or extraction of a chemical substance inside a certain subdomain. Specifically, let $\Omega \subset \mathbb{R}^{2}$ be a bounded domain with boundary $\partial \Omega$ of class $\mathcal{C}^{2}$ and $(0, T)$ a time interval, with $0<T<+\infty$. The control problem is given by the following system:
\begin{equation}\label{CNS}
\left\{
\begin{array}{llclr}
&\partial_t N- \nabla \cdot \big(a(N) \nabla N\big) + \nabla \cdot \big(\chi (N) \nabla C\big)  =0 \qquad & \mbox{ in }    (0,T) \times \Omega ,\\[1em]
&\partial _{t} C -\Delta C =\gamma N -\beta C + fC1_{\Omega_c} \qquad & \mbox{ in } (0,T) \times \Omega ,
\end{array}
\right.
\end{equation}
where $f: Q_c:=   (0, T) \times \Omega_c\rightarrow \mathbb{R}$ is the control with $\Omega_c \subset \Omega \subset \mathbb{R}^2$ the control domain, and the unknowns $N, C$ are the cellular density and chemical concentration, respectively. The homogeneous Neumann boundary conditions over $\Sigma_{T}=  (0,T) \times \partial\Omega $ are,
\begin{equation}\label{cl}
a(N)\nabla N \cdot \eta = 0 , \nabla C \cdot \eta = 0, 
\end{equation}
where $\eta$ is the exterior unit normal to $\partial \Omega$. The initial conditions on  $\Omega$ are given by, 
\begin{equation}\label{ci}
N(0,x) = N_{0}(x), C(0,x)= C_{0}(x).
\end{equation}\

\noindent We denote the diffusion coefficient(respectively the convection coefficient) by $a(N)$ (resp. $\chi(N)$). The general form of $\chi(N)$ is $Nh(N)$ where $h(N)$ is the sensitivity function of the chemical. For example, certain bacteria, such as "Bacillus subtilis", have the ability to move toward areas with higher oxygen concentrations in order to survive. On the other hand, in organisms like "Dictyostelium discoideum", chemicals may be produced and consumed, which aids in the transition to becoming multicellular organisms (see \cite{tuval2005bacterial}). This function $\chi$ vanishes in the absence of population $(N=0)$ and above a certain maximum density. This condition has a strong biological significance: once the cell density in a particular location within $\Omega$ surpasses the threshold value of $N=N_{m}$, the cells no longer accumulate there. This phenomenon also referred to as the threshold cell density effect or volume-filling effect, has been a key component in previous studies on chemotaxis in \cite{hillen2001global}.
\noindent We denote the control function by $f = f(t,x)$, and this control can be either positive or negative depending on the region of the domain. This means that in areas where 
$f\geq 0$, the substance is being injected into the system, while in areas where 
$f\leq 0$, the substance is being extracted. It is worthwhile noting that using this form of control enables us to ensure the non-negativity of the unknown parameter $C$ in System \eqref{CNS}. However, if we replace $fC$  with just a distributed control $f$ in the second equation of \eqref{CNS}, we would need to assume that $f \geq 0$ to guarantee the positivity of the  concentration $C$.\\

\noindent In recent decades, there has been a significant increase in research activity focused on the study of the chemotaxis model, which explains the attraction (resp. the repulsion) of living cells towards (resp. away from) the chemical substrates. The system described by \eqref{CNS}, in the absence of control (i.e., $f = 0$), has been explored with a specific emphasis on its non-linear degenerate diffusion term. Numerous studies have also investigated scenarios without the degeneracy of diffusion terms. Notably, the authors in \cite{bendahmane2007two} and \cite{laurenccot2005chemotaxis} 
established the global existence and uniqueness of smooth classical solutions in $2D$ domains and demonstrated the global existence of weak solutions in dimension $3D$. This is facilitated by the fact that all solutions reside within $H^1(\Omega)$. The persistent challenge of addressing non-linearity and degeneracy in the diffusive term adds a layer of complexity, especially in the context of optimal control in this work. The exploration of degenerate diffusion problems began by Laurencot and Wrzosek in \cite{laurenccot2005chemotaxis} with degeneracy at a single point. They have successfully established their existence while imposing conditions to ensure the uniqueness of solutions. Subsequently, in \cite{bendahmane2007two}, the authors investigated the case of degeneracy at two points and established both the existence and uniqueness of solutions with an additional convective term $\chi(N)\nabla C$. Consequently, numerous studies, have been conducted to explore the modified Keller-Segel model both theoretically and numerically, see for instance \cite{andreianov2011finite,chamoun2014monotone,ibrahim2014efficacy}.\\

\noindent Recently, there has been a surge in interest among researchers towards optimal control problems governed by interconnected partial differential equations. Specifically,  distributed optimal control issues concerning a diffuse interface model of tumor growth have been explored in \cite{colli2017optimal}. A separate discussion by Liu and Yuan in \cite{liu2022optimal} centered on the optimal control of a fully parabolic attraction-repulsion chemotaxis model with a logistic source in a two-dimensional setting. Additionally, in \cite{dai2019optimal}, the authors presented a thorough investigation of an optimal control problem for a haptotaxis model of solid tumor infiltration, considering the use of multiple treatments for cancer. Recently, \cite{guillen2020bi} analyzed the parabolic chemo-repulsion model with a nonlinear production in two-dimensional domains:

\[
\begin{cases}
   \displaystyle u_{t}-\Delta u=\nabla \cdot(u \nabla v), & x \in \Omega, t>0, \\
   \displaystyle v_{t}-\Delta v+v=u^{p}+fv 1_{\Omega_{c}}, & x \in \Omega, t>0 \, .
\end{cases}
\]

\noindent They established the existence and uniqueness of a global-in-time strong state solution for each control, along with the existence of a global optimum solution. In \cite{guillen2020optimal}, the authors considered a bilinear optimal control problem associated with the $2D$ chemo-repulsion model. The existence, uniqueness and regularity of strong solutions for this model were deduced, and for weak solutions in \cite{braz2022bilinear}. They derived the first-order optimality conditions using a Lagrange multipliers theorem.

\noindent 
Unfortunately, there remains a gap in the literature concerning optimal control for chemotaxis problems with degenerate diffusion. Indeed, recent research has explored a bilinear optimal control problem even for the chemotaxis-Navier-Stokes model, but with linear non-degenerate diffusion terms, in bounded three-dimensional domains (see \cite{lopez2021optimal}). In this paper, we face the challenge of studying the solutions' weak sense for the optimal control of chemotaxis models \eqref{CNS}-\eqref{ci} with two-sidedly degenerate diffusive function. This degeneracy not only complicates the existence of weak solutions for the direct problem but also poses challenges in the context of the adjoint problem, despite its linearity. Additionally, there is a persistent difficulty in consistently ensuring the discrete attainment of the maximum principle.\\

\noindent The outline of the paper is as follows:
\begin{itemize}
\item In Section 2, we introduce some notations and assumptions that are usefull for the study of the problem.
\item In Section 3, we establish the existence of weak solutions for the non-degenerate system through a semi-discretization method. We demonstrate that the semi-discretization solution satisfies the maximum principle using specific techniques. Subsequently, we explore the regularization by letting the regularity tend to zero. Moreover, we provide proof of the uniqueness of weak solutions for the degenerate system.
\item In Section 4, we present our second result, focusing on the existence of optimal control. We derive the adjoint problem and explore the first-order optimality conditions. Finally, we establish the existence of a weak solution for the adjoint problem. For that, we define a regularized system to overcome the degeneracy and then demonstrate its existence using the Faedo-Galerkin method. Subsequently,  under further assumptions on the regularity of cell density of the direct problem, we establish the existence of the weak solutions of the adjoint problem by passing to the limit when $\varepsilon$ toward zero.
\item A conclusion is provided in Section 5 to summarize this work. 
\end{itemize}

\section{Preliminaries and Main results}

First, we introduce the main notations that we will use later on.

\noindent We assume that the data satisfies the following properties:
\begin{align}
& a: [0,1] \longmapsto \mathbb{R}^{+}, a \in \mathcal{C}^{1}[0,1], a(0)=a(1)=0 \mbox{ and } a(s) > 0 \mbox{ for } 0 < s < 1,\label{aa} \\[1em]
& \chi : [0,1] \longmapsto \mathbb{R},\chi  \in \mathcal{C}^{1}[0,1]\; \mbox{and}\; \chi(0)=\chi(1)=0, \label{chi} \\[1em]
& f\in  L^\infty([0,T]\times\Omega) \, .\label{ff}
\end{align}

\noindent The first main result is to prove the existence and uniqueness of a weak solution of \eqref{CNS}-\eqref{ci} and its continuous dependency 
with respect to the control $f$ .

\noindent We start by giving the definition of a weak solution of system \eqref{CNS}-\eqref{ci}.
\begin{definition} \label{solution faible}
Assume that $0 \leq N_{0} \leq 1$,  $C_{0} \geq 0$ and $C_{0} \in L^{\infty}(\Omega)$. A pair  $(N,C)$ is said to be a {\it{\bf weak solution}} of \eqref{CNS}-\eqref{ci} if
\begin{align*}
&0 \leq N(t,x) \leq 1, \, C(t,x) \geq 0  \mbox{ a.e. in $Q_{T}=  (0,T) \times \Omega $, }\\
&N \in \mathcal{C}_{w}(0,T;L^{2}( \Omega)), \, \quad \partial_{t}N \in  L^{2}(0,T;(H^{1}( \Omega))^{'}), \, \quad  A(N):= \int_0^N a(r) dr \in L^{2}(0,T;H^{1}( \Omega))\, ,\\
&C \in L^{\infty}(Q_{T}) \cap L^{2}(0,T;H^{1}( \Omega)) \cap \mathcal{C}(0,T; L^{2}(\Omega)); \,  \quad \partial_{t}C \in  L^{2}(0,T;(H^{1}( \Omega))')\, ,
\end{align*}
and $(N,C)$ satisfy
\begin{align}
& \int_0^T <\partial_{t}N,\psi_{1}>_{(H^{1})',H^{1}} dt+ \iint_{Q_{T}}{[a(N)\nabla N-\chi(N) \nabla C]  \cdot \nabla \psi_{1}} \; dx dt=   0 ,\label{nhn} \\
& \int_0^T <\partial_{t}C,\psi_{2}>_{(H^{1})',H^{1}} dt+ \iint_{Q_{T}}{\nabla C \cdot \nabla \psi_{2}} \; dx dt= \iint_{Q_{T}}{ [\alpha N -\beta C + {f\hspace{0.0001em}C \hspace{0.0001em} 1_{\Omega_c}}]\psi_{2}} \; dx dt\, ,\label{chc}
\end{align}
for all $\psi_{1}$ and  $\psi_{2}  \in L^{2}(0,T;H^{1}(\Omega))$, where $\mathcal{C}_{w}(0,T;L^{2}(\Omega))$ denotes the space of continuous functions with values in  $L^{2}(\Omega)$ endowed with the weak topology,
and $<\cdot,\cdot>$ denotes the duality pairing between $H^1(\Omega)$ and $(H^1(\Omega))'$.
\end{definition}

\noindent We now present our primary results concerning the global existence and uniqueness of weak solutions in the following theorems (proved in the next section):
\begin{theoreme}\label{th:existence}
Assume that \eqref{aa} to \eqref{ff} hold. If \, $0 \leq N_{0} \leq 1$, $C_{0} \geq 0$ a.e. in $\Omega$ and  $C_{0} \in L^{\infty}(\Omega)$, then System \eqref{CNS}-\eqref{cl}-\eqref{ci} has a global weak solution $(N,C)$ in the sense of Definition \ref{solution faible}.
\end{theoreme}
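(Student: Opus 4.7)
The plan is to approach Theorem \ref{th:existence} via a two-level approximation: first regularize the two-sidedly degenerate diffusion by setting $a_\eps(s) = a(s)+\eps$ for $\eps>0$, and then solve the resulting uniformly parabolic system by a time semi-discretization (Rothe/implicit Euler) scheme of step $\Delta t$. At each time level $n$ this produces a coupled elliptic system on $\Omega$ for $(N_\eps^n,C_\eps^n)$. Existence at each step will be obtained by a Schauder (or Leray--Schauder) fixed point argument: freeze $N$ in the chemotactic term, solve the linear equation for $C$, then inject this $C$ back into the quasilinear equation for $N$, using the strict positivity of $a_\eps$ to get coercivity in $H^1(\Omega)$.

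The structurally most delicate property is the preservation of the maximum principle $0 \le N_\eps^n \le 1$ and $C_\eps^n \ge 0$ at the discrete level. For $N$, I would test the scheme with $(N_\eps^n-1)^+$ and with $(N_\eps^n)^-$, crucially invoking $\chi(0)=\chi(1)=0$ from \eqref{chi}: these boundary vanishing conditions make the chemotactic flux $\chi(N_\eps^n)\nabla C_\eps^n$ vanish wherever $N_\eps^n\in\{0,1\}$, so the cross term does not destroy the bound. For $C$, the bilinear term $fC\mathbf{1}_{\Omega_c}$ is absorbed on the left, and \eqref{ff} plus Stampacchia truncation combined with a discrete Gronwall induction on $n$ yield both $C_\eps^n\ge 0$ and a uniform $L^\infty$ bound of the form $\|C_\eps^n\|_{L^\infty}\le (\|C_0\|_{L^\infty}+T\|N\|_{L^\infty})e^{T\|f\|_{L^\infty}}$.

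Next I would derive energy estimates uniform in $\Delta t$ and $\eps$. Testing the $N$-equation with $N_\eps^n$ and the $C$-equation with $C_\eps^n$, summing over $n$, and applying Young's inequality together with the $L^\infty$ bounds, yields
\begin{equation*}
\sup_n \bigl(\|N_\eps^n\|_{L^2}^2+\|C_\eps^n\|_{L^2}^2\bigr)+\sum_n \Delta t\bigl(\|A_\eps(N_\eps^n)\|_{H^1}^2+\|C_\eps^n\|_{H^1}^2\bigr)\le K,
\end{equation*}
where $A_\eps(s)=\int_0^s a_\eps(r)\,dr$ and $K$ is independent of $\Delta t$ and $\eps$. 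A discrete time-translate estimate on the piecewise-constant interpolants of $N_\eps$ and $C_\eps$ provides Kolmogorov-type compactness. Passing $\Delta t\to 0$ via an Aubin--Lions/Simon argument then delivers a weak solution $(N_\eps,C_\eps)$ of the non-degenerate regularized system, inheriting the pointwise bounds.

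The main obstacle is the degenerate limit $\eps\to 0$. The uniform bounds give weak convergence of $A_\eps(N_\eps)$ in $L^2(0,T;H^1(\Omega))$, weak-$\ast$ convergence of $N_\eps$ in $L^\infty$, and a bound on $\partial_t N_\eps$ in $L^2(0,T;(H^1)')$. By a variant of the Aubin--Lions lemma adapted to the degenerate setting (identifying strong convergence of $N_\eps$ via the monotone continuous function $A$), I would extract a subsequence with $N_\eps\to N$ a.e.\ and in $L^2(Q_T)$. To identify the limit flux I would write $a_\eps(N_\eps)\nabla N_\eps=\nabla A_\eps(N_\eps)-\eps\nabla N_\eps$, so that the first summand converges weakly to $\nabla A(N)$ while the remainder $\eps\nabla N_\eps$ is controlled in $L^2$ by the extra estimate $\eps\|\nabla N_\eps\|_{L^2(Q_T)}^2\le K$ obtained from testing with $N_\eps$, hence $\sqrt{\eps}\nabla N_\eps$ is bounded and $\eps\nabla N_\eps\to 0$ strongly. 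Similarly, $\chi(N_\eps)\to\chi(N)$ strongly in $L^p(Q_T)$ for every $p<\infty$ by continuity and dominated convergence, which combined with the weak $L^2$ convergence of $\nabla C_\eps$ identifies the cross term. Collecting these limits gives the weak formulations \eqref{nhn}--\eqref{chc} in the sense of Definition \ref{solution faible}, completing the argument.
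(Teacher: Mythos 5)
Your proposal is correct and follows essentially the same route as the paper: $\eps$-regularization of the degenerate diffusion, implicit Euler semi-discretization solved by Lax--Milgram/Schauder, a discrete maximum principle, uniform estimates with Kolmogorov/Aubin--Lions compactness, and identification of the degenerate limit flux through $\nabla A_\eps(N_\eps)$ and the vanishing of $\eps\nabla N_\eps$. The only point of divergence is the bilinear control term: you absorb $fC_{n+1}$ implicitly (which forces the mild step restriction $h\|f\|_{L^\infty}<1$ for positivity of $C$), whereas the paper splits $f=f^{+}-f^{-}$ and discretizes $f^{+}C_n$ explicitly and $f^{-}C_{n+1}$ implicitly, obtaining positivity and boundedness of the chemoattractant unconditionally in $h$.
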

\begin{theoreme}\label{th:unicite}
The weak solution (N,C) of \eqref{CNS}-\eqref{ci} is unique under the following assumption: there exists $K_{0}\geq 0$ such that,
\begin{align}\label{unicite}
(\chi(N_{1})-\chi(N_{2}))^{2} \leq K_{0} (N_{1}-N_{2}) (A(N_{1})-A(N_{2})), \, \, \, \forall N_{1},N_{2} \in [0,1]\, .
\end{align}
\end{theoreme}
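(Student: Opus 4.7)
The plan is to compare two weak solutions $(N_1,C_1)$ and $(N_2,C_2)$ of \eqref{CNS}--\eqref{ci} emanating from the same initial data. Writing $\bar N := N_1-N_2$ and $\bar C := C_1-C_2$, the degeneracy of $a$ obstructs a direct $L^2$ estimate on $\bar N$, so I will use an $H^{-1}$-duality argument for the $N$-equation and a standard $L^2$ estimate for the $C$-equation. Testing \eqref{nhn} with $\psi_1\equiv 1$ yields mass conservation, so $\int_\Omega \bar N(t)\,dx=0$ for a.e.\ $t$. The mean-zero solution $w(t)\in H^1(\Omega)$ of the Neumann problem $-\Delta w=\bar N$ is therefore well defined and satisfies $\|\nabla w\|_{L^2}^2=\int_\Omega \bar N\,w\,dx=:\|\bar N\|_{H^{-1}}^2$.

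Using $w$ as a test function in the difference form of \eqref{nhn}, together with the self-adjointness identity $\langle \partial_t\bar N,w\rangle=\tfrac{1}{2}\tfrac{d}{dt}\|\bar N\|_{H^{-1}}^2$ and the integration by parts $\int\nabla(A(N_1)-A(N_2))\cdot\nabla w=\int(A(N_1)-A(N_2))\bar N$, I obtain
\begin{equation*}
\tfrac12\|\bar N(t)\|_{H^{-1}}^2+\int_0^t\!\!\int_\Omega \bar N\,(A(N_1)-A(N_2))\,dx\,ds=\int_0^t\!\!\int_\Omega\big[(\chi(N_1)-\chi(N_2))\nabla C_1+\chi(N_2)\nabla\bar C\big]\cdot\nabla w\,dx\,ds.
\end{equation*}
The second term on the left is non-negative by monotonicity of $A$. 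The crucial estimate exploits \eqref{unicite}: applying Cauchy--Schwarz so as to keep the factor $\bar N(A(N_1)-A(N_2))$ intact,
\begin{equation*}
\Big|\int(\chi(N_1)-\chi(N_2))\nabla C_1\cdot\nabla w\,dx\Big|\leq \sqrt{K_0}\Big(\int \bar N\,(A(N_1)-A(N_2))\,|\nabla C_1|^2\,dx\Big)^{1/2}\|\nabla w\|_{L^2}.
\end{equation*}
Given the additional regularity $\nabla C_1\in L^2(0,T;L^\infty(\Omega))$, the integrand inside the square root is bounded by $\|\nabla C_1\|_{L^\infty}^2\,\bar N\,(A(N_1)-A(N_2))$; a single Young inequality then yields the upper bound $\tfrac12\int \bar N\,(A(N_1)-A(N_2))\,dx+C\|\nabla C_1\|_{L^\infty}^2\|\bar N\|_{H^{-1}}^2$, whose first summand is absorbed into the monotone term on the left. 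The second cross term is handled by $\tfrac14\|\nabla\bar C\|_{L^2}^2+C\|\bar N\|_{H^{-1}}^2$ using only $\|\chi\|_{L^\infty([0,1])}<\infty$.

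Testing the difference form of \eqref{chc} with $\bar C$ and dualizing the source through $\int \bar N\bar C\leq \|\bar N\|_{H^{-1}}\|\bar C\|_{H^1}$ gives, after Young, $\tfrac12\tfrac{d}{dt}\|\bar C\|_{L^2}^2+\tfrac12\|\nabla\bar C\|_{L^2}^2\leq C(\|\bar N\|_{H^{-1}}^2+\|\bar C\|_{L^2}^2)$. Summing this with the $N$-estimate yields $\tfrac{d}{dt}\mathcal{E}(t)\leq g(t)\,\mathcal{E}(t)$ with $\mathcal{E}(t):=\|\bar N(t)\|_{H^{-1}}^2+\|\bar C(t)\|_{L^2}^2$ and $g(t)=C(1+\|\nabla C_1(t)\|_{L^\infty}^2)\in L^1(0,T)$; since $\mathcal{E}(0)=0$, Gr\"onwall's lemma forces $\mathcal{E}\equiv 0$, hence $N_1=N_2$ and $C_1=C_2$. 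The main obstacle is securing $\nabla C_1\in L^2(0,T;L^\infty(\Omega))$, which is not part of Definition \ref{solution faible}; this gap must be bridged by a parabolic bootstrap, noting that the source $\gamma N_1-\beta C_1+f C_1 1_{\Omega_c}$ lies in $L^\infty(Q_T)$, so $L^p$-maximal regularity for the Neumann heat equation (with $p>2$) promotes $C_1$ to $L^p(0,T;W^{2,p}(\Omega))$, and the two-dimensional Sobolev embedding $W^{1,p}(\Omega)\hookrightarrow L^\infty(\Omega)$ for $p>2$ then supplies the required control.
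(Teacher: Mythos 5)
Your proposal is correct and follows essentially the same route as the paper's proof: the $H^{-1}$-duality argument via the mean-zero Neumann problem $-\Delta w=\bar N$, absorption of the cross term $(\chi(N_1)-\chi(N_2))\nabla C_1\cdot\nabla w$ into the monotone term $\int\bar N\,(A(N_1)-A(N_2))$ using \eqref{unicite} and Young's inequality, a standard $L^2$ estimate for $\bar C$ with the source dualized through $\nabla w$, and the parabolic bootstrap giving $\nabla C_1\in L^p(0,T;L^\infty(\Omega))$ before Gr\"onwall. Your explicit justification of the compatibility condition $\int_\Omega\bar N\,dx=0$ via testing with $\psi_1\equiv 1$ is a detail the paper leaves implicit, but the argument is otherwise the same.
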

%



\section{Global existence and uniqueness of solutions of problem \eqref{CNS}-\eqref{ci}}

The aim of this section is to demonstrate briefly  Theorems \ref{th:existence} and \ref{th:unicite}, since their proofs follow the same guidelines of \cite{chamoun2014coupled}.
Compared to \cite{chamoun2014coupled}, our approach emphasizes positivity and boundedness irrespective of the control's sign. For that, in our numerical scheme (explained in the next subsection), we have incorporated two discretizations of $f$, one positive and one negative, to guarantee first the existence of discrete solutions for the non-degenerate problem while ensuring that the chemoattractant's positivity and boundedness are preserved, independent of the regularization.
Note that the main difficulty is the strong degeneracy of the diffusion term and the conservation of the maximum principle while introducing a control term to the system. To navigate the first obstacle, we replace the original diffusion term $a(N)$ by $a_{\varepsilon}(N) = a(N) + \varepsilon$ and we consider for each fixed $\varepsilon > 0$, the following non-degenerate problem:
\begin{equation}
\label{CNS1}
\left\{
\begin{array}{rclr}
&\partial _{t} N_{\varepsilon} -\nabla \cdot(a_{\varepsilon}(N_{\varepsilon}) \nabla N_{\varepsilon}) + \nabla \cdot (\chi (N_{\varepsilon}) \nabla C_{\varepsilon})= 0,\\[0.5em]
&\hspace{-5em}\partial _{t} C_{\varepsilon} -\Delta C_{\varepsilon} = \alpha N_{\varepsilon}-\beta C_{\varepsilon} + {f\hspace{0.0001em}C\hspace{0.0001em} 1_{\Omega_c}} ,
\end{array}
\right.
\end{equation}
with the following  conditions,
\begin{equation}\label{bord}
\left\{
\begin{array}{rclr}
&a(N_{\varepsilon})\nabla N_{\varepsilon}\cdot \eta = 0, \, \nabla C_{\varepsilon}  \cdot \eta=0 \mbox { on }  (0,T) \times \partial \Omega \, ,\\[0.5em]
&\hspace{-5em} N_{\varepsilon}(0,x) = N_{0}(x), \, C_{\varepsilon}(0,x)= C_{0}(x) .
\end{array}
\right. 
\end{equation}
Using a semi-discretization in time, the existence of weak solutions to the non-degenerate problem \eqref{CNS1} is well guaranteed in the following subsection. Then, we ensure that solutions satisfy the maximum principle, the suitable estimates and compactness arguments. Next, the sequence of approximate solutions $(N_{\varepsilon}, C_{\varepsilon})$ converges to weak solutions $(N,C)$ of \eqref{CNS}-\eqref{ci} in the sense of definition \ref{solution faible}, when $\varepsilon$ tends to zero.

\subsection{Construction of an approximating solution}\label{pr}
In this subsection, we show the existence of a weak solution to system \eqref{CNS1}-\eqref{bord} by following the method introduced in \cite{marpeau2006mathematical} for systems modeling the miscible displacement of radioactive elements in a heterogeneous porous domain, and in \cite{chamoun2014coupled} for a coupled anisotropic chemotaxis-fluid model. 
A semi-discretization in time is used to prove the existence result for any data $f$. For the reader's convenience, we detail the main steps of our method. 

\noindent We decompose $f$ as the sum of its positive and negative components denoted respectively as $f^{+}$ and $f^{-}$,  and  defined as follows: $f = f^+ + f^{-}$ such that
\[ f^{+} = \max(f, 0) \quad \mbox{ and } \quad  f^{-} = \max(-f, 0). \]

 
\noindent Consider constant time step  $h=\Delta t = \frac{T}{\tilde{N}}$, $T>0$ and  $\tilde{N}\in \mathbb{N}^{*}$.
   
\noindent Let $\mathcal{E}$ be a given Banach space. For all $ v=\left(v_{0}, v_{1}, \ldots, v_{\tilde{N}}\right) \in \mathcal{E}^{N+1}$, we introduce following operators:
\begin{enumerate}
\item  The constant interpolation operator defined from $[0,T]$ to $\mathcal{E}$ given by
\begin{equation*}
\displaystyle\left\{
\begin{array}{clcc}
\Pi_{\tilde{N}}^{0} v(0)= v_{0}\, ,\\
\displaystyle \Pi_{\tilde{N}}^{0} v(t)= \sum_{n=0}^{\tilde{N}-1} v_{n+1} \chi_{]nh,(n+1)h]}(t)\;, \mbox{ if } 0<t \leq T\, ,
\end{array}
\right.
\end{equation*}
where $\chi_{]nh,(n+1)h]}(t)$ being the characteristic function in $ ]nh,(n+1)h]$.
\item The linear interpolation operator from $[0,T]$ to $\mathcal{E}$ defined by
\begin{align*}
&\Pi_{\tilde{N}}^{1} v(t)= \sum_{n=0}^{\tilde{N}-1} \Big[(1+n-\frac{t}{h})v_{n}+(\frac{t}{h}-n)v_{n+1}\Big] \chi_{[nh,(n+1)h]}(t)\,,\\
&\frac{d}{dt} \Big(\Pi_{\tilde{N}}^{1} v(t)\Big)= \sum_{n=0}^{\tilde{N}-1} \Big[\frac{1}{h}(v_{n+1}-v_{n})\Big] \chi_{]nh,(n+1)h]}(t)\, .
\end{align*}  
\end{enumerate}
\noindent Remark that
\begin{align}\label{defpi}
\left\|\Pi_{\Bar{N}}^{0} v\right\|_{L^{p}((0, T) ; \mathcal{E})}=\left(h \sum_{n=1}^{\tilde{N}}\left\|v_{n}\right\|_{\mathcal{E}}^{p}\right)^{1 / p},\quad \text { if } 1 \leqslant p<\infty,
\end{align}
\begin{align}\label{infini}
\left\|\Pi_{\tilde{N}}^{0} v\right\|_{L^{\infty}((0, T) ; \mathcal{E})}=\left\|\Pi_{\tilde{N}}^{1} v\right\|_{L^{\infty}((0, T) ; \mathcal{E})}=\max _{n=1, \ldots, \Bar{N}}\left(\left\|v_{n}\right\|_{\mathcal{E}}\right)  .
\end{align}
Next, for all functions   $\zeta$ in $L^{1}((0, T) ; \mathcal{E})$, we define the averaging operator   $\Lambda^{\tilde{N}}$ by\\ $\Lambda^{\tilde{N}} \zeta^{\tilde{N}}=$ $((\Lambda^{\tilde{N}} \zeta)_{1}, \ldots,(\Lambda^{\tilde{N}} \zeta)_{\tilde{N}+1}) \in \mathcal{E}^{\tilde{N}+1}$, with 
\begin{align}
& \left(\Lambda^{\Bar{N}} \zeta\right)_{n+1}=\left(\Lambda^{\tilde{N}} \zeta\right)^{+}_{n+1}-\left(\Lambda^{\tilde{N}} \zeta\right)^{-}_{n+1} =\frac{1}{h} \int_{n h}^{(n+1) h} \zeta^{+}(t) d t-\frac{1}{h} \int_{n h}^{(n+1) h} \zeta^{-}(t) d t, \quad \text { for } 0\leqslant n \leqslant \tilde{N}.
\end{align}
If  $\zeta \in L^{p}((0, T) ; \mathcal{E})$, there holds 
\begin{equation}\label{aprox chakl f}
 \left\|\Pi_{\tilde{N}}^{0} \Lambda^{\tilde{N}} \zeta\right\|_{L^{p}((0, T) ; \mathcal{E})}\leq  \left\| \zeta\right\|_{L^{p}((0, T) ; \mathcal{E})},\quad \text{if} \ 1 \leq p \leq \infty,
 \end{equation}
\begin{equation}\label{cv fort chakel f}
   \begin{split}
     \quad\quad\quad\quad \quad&\Pi_{N}^{0} (\Lambda^{\tilde{N}} \zeta)^{+} \underset{\tilde{N} \rightarrow       \infty}{\longrightarrow}{\zeta^{+}}  \quad \text{strongly in } L^{p}((0, T)   ; \mathcal{E}), \quad \text{if} \  1\leq p \leq \infty,\\
 & \Pi_{N}^{0} (\Lambda^{\tilde{N}} \zeta)^{-} \underset{\tilde{N} \rightarrow \infty}{\longrightarrow}{\zeta^{-}}  \quad \text{strongly in } L^{p}((0, T) ; \mathcal{E}), \quad \text{if} \  1\leq p \leq \infty \, .
\end{split}
\end{equation}
Next, we define a family of approximate solutions by the following  discretized scheme (in time):  

\noindent Setting $f^{\tilde{N}}=\Lambda^{\tilde{N}} f$, we define $N^{\tilde{N}}_{\varepsilon}=(N_{0,\varepsilon}^{\tilde{N}}, N_{1,\varepsilon}^{\tilde{N}}, ...,N_{\tilde{N},\varepsilon}^{\tilde{N}})$ and $C^{\tilde{N}}=( C_{0,\varepsilon}^{\tilde{N}}, C_{1,\varepsilon}^{\tilde{N}},...,C_{\tilde{N},\varepsilon}^{\tilde{N}})$ such that:  
$$(N_{0,\varepsilon}^{\tilde{N}},C_{0,\varepsilon}^{\tilde{N}}) = (N_{0},C_{0}) \mbox{ the given initial data,}$$
for each $0\le n \le \tilde{N}-1$, having  $(N_{n,\varepsilon}^{\tilde{N}},C_{n,\varepsilon}^{\tilde{N}})$,
we compute $(N_{n+1,\varepsilon}^{\tilde{N}},C_{n+1,\varepsilon}^{\tilde{N}})$  solution of the following system: $\forall\; \psi_{1}, \psi_{2} \in H^{1}(\Omega) $,
\begin{equation}\label{eqn}
\hspace{-6em}\displaystyle\frac{1}{h} \int_{\Omega}{(N_{n+1,\varepsilon}^{\tilde{N}}-N_{n,\varepsilon}^{\tilde{N}})\psi_{1}} \; dx +\int_{\Omega}{\Big(a_{\varepsilon}(N_{n+1,\varepsilon}^{\tilde{N}}) \nabla N_{n+1,\varepsilon}^{\tilde{N}}-\chi (N_{n+1,\varepsilon}^{\tilde{N}}) \nabla C_{n+1,\varepsilon}^{\tilde{N}}\Big) \cdot \nabla \psi_{1}}\; dx =0 \, ,
 \end{equation}

\begin{equation}\label{eqc}
\begin{split}
\displaystyle\frac{1}{h} \int_{\Omega}{(C_{n+1,\varepsilon}^{\tilde{N}}-C_{n,\varepsilon}^{\tilde{N}})\psi_{2}} \; dx  + \int_{\Omega}{\nabla C_{n+1,\varepsilon}^{\tilde{N}} \cdot \nabla \psi_{2}}\; dx=& \displaystyle\int_{\Omega}\Big(\alpha{{N_{n,\varepsilon}^{\tilde{N}}}-\beta C_{n+1,\varepsilon}^{\tilde{N}}\Big)\psi_{2}}\; dx\\
&+\displaystyle\int_{\Omega_c}\big(\; {(f_{n+1}^{\tilde{N}})^{+}C_{n,\varepsilon}^{\tilde{N}}-(f_{n+1}^{\tilde{N}})^{-}C_{n+1,\varepsilon}^{\tilde{N}}\big)
\; \psi_{2}}\; dx \, .
\end{split}
\end{equation}
Note that the approximation of the bilinear control is given explicitly for the positive part of the control and implicitly for the negative part of the control $f$ which ensures the positivity of the chemo-attractant concentration. On the other hand, the production term in the concentration equation is given explicitly which allows us to decouple the system.
\subsection{Confinement of solutions} 
A biological admissible solution is bounded, 
 that's why the discrete solution must be also bounded. The last property will be shown in the following by using the discrete maximum principle of \eqref{eqn}-\eqref{eqc} .
\begin{proposition}\label{maxprinciple}
Each approximate solution is biologically admissible, i.e
$$\exists M\geqslant 0, \forall n=0,...,\tilde{N}-1, 
 0 \leq N_{n+1,\varepsilon}^{\tilde{N}} \leq 1 \mbox{ and } 0 \leq C_{n+1,\varepsilon}^{\tilde{N}} \leq M \mbox{ a.e. } x\in \Omega.$$ 
\end{proposition}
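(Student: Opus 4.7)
The plan is to argue by induction on $n\in\{0,\dots,\tilde{N}-1\}$, maintaining simultaneously $0\leq N_{n,\varepsilon}^{\tilde N}\leq 1$, $C_{n,\varepsilon}^{\tilde N}\geq 0$, and $C_{n,\varepsilon}^{\tilde N}\leq M_n$ for a sequence $(M_n)$ to be specified, with the initial data furnishing the base step. Before running the induction I would extend $a$ and $\chi$ by zero outside $[0,1]$: this preserves $a_\varepsilon \geq \varepsilon > 0$ on $\mathbb{R}$ and, crucially, makes $\chi(s)=0$ on $\{s\leq 0\}\cup\{s\geq 1\}$, so that the chemotactic flux vanishes precisely on the sets where the forthcoming truncated test functions are supported. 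Each of the four bounds is then obtained by testing \eqref{eqn} or \eqref{eqc} against a suitable truncation and tracking signs.

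For the bounds on $N$, testing \eqref{eqn} with $\psi_1 = (N_{n+1,\varepsilon}^{\tilde N})^-$ makes the convective term vanish (by the extension of $\chi$), the diffusion contributes $-\int_\Omega a_\varepsilon|\nabla(N_{n+1,\varepsilon}^{\tilde N})^-|^2\,dx \leq 0$, and the discrete time derivative yields $-\tfrac{1}{h}\|(N_{n+1,\varepsilon}^{\tilde N})^-\|_{L^2}^2 - \tfrac{1}{h}\int_\Omega N_{n,\varepsilon}^{\tilde N}(N_{n+1,\varepsilon}^{\tilde N})^-\,dx \leq 0$ (the second term by the induction hypothesis); hence $(N_{n+1,\varepsilon}^{\tilde N})^- = 0$. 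Symmetrically, testing with $\psi_1 = (N_{n+1,\varepsilon}^{\tilde N}-1)^+$ uses $N_{n,\varepsilon}^{\tilde N}\leq 1$ on the time term and $\chi\equiv 0$ on $\{N\geq 1\}$ to kill the convective contribution, yielding $N_{n+1,\varepsilon}^{\tilde N}\leq 1$.

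For $C_{n+1,\varepsilon}^{\tilde N}\geq 0$, I would test \eqref{eqc} with $\psi_2 = (C_{n+1,\varepsilon}^{\tilde N})^-$. After rearrangement every term lands on the same side with the correct sign:
\begin{equation*}
\bigl(\tfrac{1}{h}+\beta\bigr)\|(C_{n+1,\varepsilon}^{\tilde N})^-\|_{L^2}^2 + \|\nabla(C_{n+1,\varepsilon}^{\tilde N})^-\|_{L^2}^2 + \int_{\Omega_c}(f_{n+1}^{\tilde N})^-\bigl((C_{n+1,\varepsilon}^{\tilde N})^-\bigr)^2\,dx + R_n = 0,
\end{equation*}
where $R_n = \tfrac{1}{h}\int_\Omega C_{n,\varepsilon}^{\tilde N}(C_{n+1,\varepsilon}^{\tilde N})^-\,dx + \alpha\int_\Omega N_{n,\varepsilon}^{\tilde N}(C_{n+1,\varepsilon}^{\tilde N})^-\,dx + \int_{\Omega_c}(f_{n+1}^{\tilde N})^+C_{n,\varepsilon}^{\tilde N}(C_{n+1,\varepsilon}^{\tilde N})^-\,dx \geq 0$ by the induction hypothesis. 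The third term on the left is produced precisely by the \emph{implicit} treatment of $(f_{n+1}^{\tilde N})^-$ in the scheme (explicit treatment would reverse its sign). Every term being non-negative, the identity forces $(C_{n+1,\varepsilon}^{\tilde N})^- = 0$.

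The main obstacle is the upper bound on $C$, since the bilinear source $fC$ can amplify the concentration in time and no $n$-independent bound is available from the structure alone. I would set $M_0 = \|C_0\|_{L^\infty}$ and define $(M_n)$ by the recurrence $M_{n+1} = (1 + h\|f\|_{L^\infty})M_n + h\alpha$, and prove $C_{n+1,\varepsilon}^{\tilde N}\leq M_{n+1}$ by testing \eqref{eqc} with $\psi_2 = (C_{n+1,\varepsilon}^{\tilde N} - M_{n+1})^+$. The decomposition $C_{n+1}-C_n = (C_{n+1}-M_{n+1})+(M_{n+1}-M_n)+(M_n-C_n)$, combined with the induction hypothesis $C_{n,\varepsilon}^{\tilde N}\leq M_n$, lower-bounds the time contribution by $\tfrac{1}{h}\|(C_{n+1,\varepsilon}^{\tilde N}-M_{n+1})^+\|_{L^2}^2 + (\|f\|_{L^\infty}M_n+\alpha)\int_\Omega(C_{n+1,\varepsilon}^{\tilde N}-M_{n+1})^+\,dx$, while the right-hand side is upper-bounded using $N_{n,\varepsilon}^{\tilde N}\leq 1$, $C_{n,\varepsilon}^{\tilde N}\leq M_n$, and $|f|\leq \|f\|_{L^\infty}$; the choice of recurrence is exactly calibrated to cancel these linear contributions and leave $(\tfrac{1}{h}+\beta)\|(C_{n+1,\varepsilon}^{\tilde N}-M_{n+1})^+\|_{L^2}^2 + \|\nabla(C_{n+1,\varepsilon}^{\tilde N}-M_{n+1})^+\|_{L^2}^2 \leq -\beta M_{n+1}\int_\Omega(C_{n+1,\varepsilon}^{\tilde N}-M_{n+1})^+\,dx \leq 0$, hence $(C_{n+1,\varepsilon}^{\tilde N}-M_{n+1})^+ = 0$. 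A discrete Gronwall argument on the recurrence bounds $M_n \leq (\|C_0\|_{L^\infty}+\alpha/\|f\|_{L^\infty})\,e^{T\|f\|_{L^\infty}} =: M$ uniformly in $h$, completing the proof.
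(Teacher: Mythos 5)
Your proof is correct and follows essentially the same route as the paper: induction on $n$, testing with the truncations $(C_{n+1,\varepsilon}^{\tilde N})^-$ and $(C_{n+1,\varepsilon}^{\tilde N}-M_{n+1})^+$, exploiting the implicit discretization of $(f_{n+1}^{\tilde N})^-$ to obtain positivity, and a discrete Gronwall/$(1+x)\le e^{x}$ iteration for the $h$-uniform upper bound. The only differences are cosmetic: you supply the $N$-bounds explicitly (the paper delegates them to \cite{chamoun2014coupled}), and you run the upper-bound induction against a predefined constant sequence $(M_n)$ rather than the paper's somewhat looser pointwise iteration with $M_n=\abs{C_{n,\varepsilon}^{\tilde N}}(1+h\abs{f_{n+1}^{\tilde N}})+h\alpha$, which is if anything cleaner.
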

\begin{proof}
The proof for $N_{n+1}^{\tilde{N}}$ is given by \cite{chamoun2014coupled}. We will treat only the 
concentration equation of our model which contains the control term. Given $C^{-}=\max (-C, 0)$ in $H^{1}(\Omega)$, we prove now that  $C_{n+1, \varepsilon}^{\tilde{N}}\geq 0$. One has $C_{0, \varepsilon}^{\tilde{N}}=C_{0} \geq 0$. Suppose that  $C_{n, \varepsilon}^{\tilde{N}} \geq 0$ and  choose $\psi_{2}=-\left(C_{n+1, \varepsilon}^{\tilde{N}}\right)^{-}$ as a test function in \eqref{eqc}, then
 \begin{equation*}
 \begin{split}
  \displaystyle\frac{1}{h} \int_{\Omega}|C_{n+1,\varepsilon}^{\tilde{N}})^{-}|^2\; dx =&-\frac{1}{h}\int_{\Omega}{ C_{n,\varepsilon}^{\Bar{N}}(C_{n+1,\varepsilon}^{\tilde{N}})^{-}}\; dx + \int_{\Omega}{\nabla (C_{n+1,\varepsilon}^{\tilde{N}} )\cdot \nabla ( C_{n+1,\varepsilon}^{\tilde{N}})^-}\; dx
  \\
 & -\int_{\Omega}{\alpha N_{n}^{\tilde{N}}(C_{n+1,\varepsilon}^{\tilde{N}})^{-}}\; dx 
     + \int_{\Omega}{\beta\ C_{n+1,\varepsilon}^{\tilde{N}}(C_{n+1,\varepsilon}^{\tilde{N}})^{-}}\; dx\\
   & -\int_{\Omega_c}{({f_{n+1}^{\tilde{N}})^{+}}{C_{n,\varepsilon}^{\tilde{N}}}(C_{n+1,\varepsilon}^{\tilde{N}})^{-}}\; dx
    +\int_{\Omega_c}{({f_{n+1}^{\tilde{N}})^{-}}{C_{n+1,\varepsilon}^{\tilde{N}}}(C_{n+1,\varepsilon}^{\tilde{N}})^{-}}\; dx.\\
    \leqslant &-\frac{1}{h}\int_{\Omega}{ C_{n,\varepsilon}^{\Bar{N}}(C_{n+1,\varepsilon}^{\tilde{N}})^{-}}\; dx -\int_{\Omega}\abs{\nabla (C_{n+1,\varepsilon}^{\tilde{N}} )^-}^2\; dx -\int_{\Omega}{\alpha N_{n}^{\tilde{N}}(C_{n+1,\varepsilon}^{\tilde{N}})^{-}}\; dx \\
      &-\int_{\Omega}\beta\abs{(C_{n+1,\varepsilon}^{\tilde{N}})^{-}}^2\; dx -\int_{\Omega_c}{({f_{n+1}^{\tilde{N}})^{+}}{C_{n,\varepsilon}^{\tilde{N}}}(C_{n+1,\varepsilon}^{\tilde{N}})^{-}}\; dx \\
      &-\int_{\Omega_c}({f_{n+1}^{\tilde{N}})^{-}}\abs{(C_{n+1,\varepsilon}^{\tilde{N}})^{-}}^2\; dx.
    \end{split}
\end{equation*}
\noindent Since $N_{n,\varepsilon}^{\tilde{N}},\ \alpha,\ \beta$ and $(f_{n+1}^{\tilde{N}})^{\pm}$ are positive and according to  the negativity of the  terms  of the right-hand-side, we obtain
\begin{align*}
\frac{1}{h} \int_{\Omega}\abs{(C_{n+1,\varepsilon}^{\tilde{N}})^{-}}^2\; dx \leq 0\, .
 \end{align*}
Therefore, $(C_{n+1,\varepsilon}^{\tilde{N}})^{-}=\max(-C_{n+1,\varepsilon}^{\tilde{N}}, 0)=0$,
consequently, 
\begin{align}\label{geo3}
\forall n=0,..,\tilde{N}-1, \; C_{n+1,\varepsilon}^{\tilde{N}} \geq 0 \mbox{ a.e. }  x \in \Omega \, .
\end{align}

\noindent Let us now focus on the 
the existence of a constant $M$ such that $C_{n+1,\varepsilon}^{\tilde{N}}\leq M$. 
We assume that $C_{n,\varepsilon}^{\tilde{N}} \leq M. $ Multiplying \eqref{eqc} by $h$ and  choose $(C_{n+1,\varepsilon}^{\tilde{N}}-M\big)^{+}$ as the test function, lead to
\begin{equation*}
\begin{split}
&\displaystyle{  \int_{\Omega}\left(C_{n+1,\varepsilon}^{\tilde{N}}-C_{n,\varepsilon}^{\tilde{N}}\right)\left(C_{n+1,\varepsilon}^{\tilde{N}}-M\right)^{+}\; dx }\\
&\hspace{3em}=-h\int_{\Omega}\abs{\nabla(C_{n+1,\varepsilon}^{\tilde{N}}-M)^{+}}^{2}\; dx{+ h\; \alpha \int_{\Omega}N_{n,\varepsilon}^{\tilde{N}}\left(C_{n+1,\varepsilon}^{\tilde{N}}-M\right)^{+}\; dx }
\\&\hspace{4em}
-h\;\beta \int_{\Omega}C_{n+1,\varepsilon}^{\tilde{N}}\left(C_{n+1,\varepsilon}^{\tilde{N}}-M\right)^{+}\; dx\displaystyle\quad+h\int_{\Omega_c}(f_{n+1}^{\tilde{N}})^{+}C_{n,\varepsilon}^{\tilde{N}}\left(C_{n+1,\varepsilon}^{\tilde{N}}-M\right)^{+}\; dx 
\\
&\hspace{4em}\displaystyle -h\int_{\Omega_c}(f_{n+1}^{\tilde{N}})^{-}C_{n+1,\varepsilon}^{\tilde{N}}\left(C_{n+1,\varepsilon}^{\tilde{N}}-M\right)^{+}\; dx \, 
\\&\hspace{3em}\leqslant { h\; \alpha \int_{\Omega}N_{n,\varepsilon}^{\tilde{N}}\left(C_{n+1,\varepsilon}^{\tilde{N}}-M\right)^{+}\; dx }\displaystyle+h\int_{\Omega_c}(f_{n+1}^{\tilde{N}})^{+}C_{n,\varepsilon}^{\tilde{N}}\left(C_{n+1,\varepsilon}^{\tilde{N}}-M\right)^{+}\; dx,
\end{split}
\end{equation*}
Therefore, using that $N_{n,\varepsilon}^{\tilde{N}}\in [0,1] $, we get  
 \begin{equation*}
\displaystyle{  \int_{\Omega}\left(C_{n+1,\varepsilon}^{\tilde{N}}-C_{n,\varepsilon}^{\tilde{N}}-h (f_{n+1}^{\tilde{N}})^{+}C_{n,\varepsilon}^{\tilde{N}}-h \alpha\right)\left(C_{n+1,\varepsilon}^{\tilde{N}}-M\right)^{+}\; dx }\leqslant 0.
\end{equation*}
Now, we choose $M_{n}=\abs{C_{n,\varepsilon}^{\tilde{N}}}(1+h\abs{f_{n+1}^{\tilde{N}}})+h \alpha$. Thus, we have
$$\displaystyle{  \int_{\Omega}\left(C_{n+1,\varepsilon}^{\tilde{N}}-M_n\right)\left(C_{n+1,\varepsilon}^{\tilde{N}}-M\right)^{+}\; dx }\leqslant 0.$$
Our goal is to find $M$ independent of n, such that $M \geqslant M_{n}$ and 
$$\displaystyle{  \int_{\Omega}\left(C_{n+1,\varepsilon}^{\tilde{N}}-M\right)\left(C_{n+1,\varepsilon}^{\tilde{N}}-M\right)^{+}\; dx }\leqslant\displaystyle{  \int_{\Omega}\left(C_{n+1,\varepsilon}^{\tilde{N}}-M_n\right)\left(C_{n+1,\varepsilon}^{\tilde{N}}-M\right)^{+}\; dx }\leqslant 0.$$
Let us now show that $C_{n+1,\varepsilon}^{\tilde{N}}\leqslant M, \mbox{for all} \ n\in\{0,...,\tilde{N}-1\}.$ 

\noindent By using  $1+x \leqslant e^{x}$, we have
$$
\begin{aligned}
 C_{n+1,\varepsilon}^{\tilde{N}}
 &\leqslant \left|C_{n, \varepsilon}^{\tilde{N}}\right|\left(1+h|f_{n+1}^{\tilde{N}}|\right)+\alpha h \displaystyle \\&\leqslant\left|C_{n, \varepsilon}^{\tilde{N}}\right|\displaystyle e^{ h|f_{n+1}^{\tilde{N}}|}+\alpha h  \\&\leqslant\left|C_{n, \varepsilon}^{\tilde{N}}\right| e^{\int_{t_{n}}^{t_{n+1}}{|f_{n+1}^{\tilde{N}}|}\; dt}+\alpha h.
 \end{aligned}
$$
Then we get
\begin{align*}
C_{n+1,\varepsilon}^{\tilde{N}}
& \leqslant\left(|C_{n-1, \varepsilon}^{\tilde{N}}| e^{\int_{t_{n-1}}^{t_{n}}{|f_{n}^{\tilde{N}}|}\; dt}+\alpha h\right)\;\left(e^{\int_{t_{n}}^{t_{n+1}}{|f_{n+1}^{\tilde{N}}|}\; dt }\right)+\alpha h\\
&  \leqslant|C_{n-1, \varepsilon}^{\tilde{N}}| e^{\int_{t_{n-1}}^{t_{n+1}}{\norm{f}_{{\infty}}\; dt}}+\alpha h\left(1+ e^{\int_{t_{n}}^{t_{n+1}}{\norm{f}_{\infty}}\; dt}\right)  \\
&  \leqslant|C_{n-2, \varepsilon}^{\tilde{N}}| e^{\int_{t_{n-2}}^{t_{n+1}}{\norm{f}_{{\infty}}\; dt}}+\alpha h\left(1+ e^{\int_{t_{n-1}}^{t_{n}}{\norm{f}_{\infty}}\;}+e^{\int_{t_{n}}^{t_{n+1}}{\norm{f}_{\infty}}\; dt}\right) .
\end{align*}
By successive steps, we deduce that
\begin{equation*}
\begin{split}
M_{n}&\leqslant |C_{0, \varepsilon}^{\tilde{N}}| e^{\int_{0}^{t_{n+1}}{\norm{f}_{{\infty}}\; dt}}+\alpha hn\;e^{\int_{0}^{t_{n+1}}{\norm{f}_{\infty}}\; dt}+\alpha h\\
&\leqslant \norm{C_{0}}_{{\infty}} e^{\int_{0}^{T}{\norm{f}_{{\infty}}\; dt}}+\alpha T\left(1+e^{\int_{0}^{T}{\norm{f}_{\infty}}\; dt}\right).
\end{split}
\end{equation*}
Therefore, we choose $M=\norm{C_{0}}_{{\infty}} e^{\int_{0}^{T}{\norm{f}_{{\infty}}\; dt}}+\alpha T\left(1+e^{\int_{0}^{T}{\norm{f}_{\infty}}\; dt}\right)$.

\noindent This ends the proof of the proposition.
\end{proof}

\begin{remarque}
In the case of  distributed control $f$ instead of $fC$ in \eqref{CNS}, it is necessary to choose $f \geqslant 0$ to ensure the positivity of the discrete solution $C_{n+1,\varepsilon}^{\tilde{N}}$.

\noindent To guarantee the boundedness of the discrete concentration in the context of the maximum principle (specifically there exists $\rho > 0$ such that $C_{n+1, \varepsilon}^{\tilde{N}} \leqslant \rho$), the following approach is utilized:

\noindent we set $\rho_{n}={\norm{C_0}}_{\infty}+n\left(\alpha+\norm{f}_{L^{\infty(Q_T)}}\right)\Delta t$
and suppose that $C_{n,\varepsilon}^{\tilde{N}} \leq \rho_n$. We will  show  that $C_{n+1,\varepsilon}^{\tilde{N}} \leq \rho_{n+1}$. Suppose that  $C_{n, \varepsilon}^{\tilde{N}} \leq \rho_n$ and choose $(C_{n+1,\varepsilon}^{\tilde{N}}-\rho_{n+1}\big)^{+}$ as the test function in \eqref{eqc}, then

\begin{equation*}
\hspace{-7.5em}\quad\displaystyle{ \frac{1}{h} \int_{\Omega}\left(C_{n+1,\varepsilon}^{\tilde{N}}-C_{n,\varepsilon}^{\tilde{N}}+\rho_{n+1}-\rho_{n+1}\right)\left(C_{n+1,\varepsilon}^{\tilde{N}}-\rho_{n+1}\right)^{+}dx }=-\int_{\Omega}\abs{\nabla(C_{n+1,\varepsilon}^{\tilde{N}}-\rho_{n+1}\big)^{+}}^{2}\; dx
\end{equation*}
$$\hspace{13.7em}{+\alpha \int_{\Omega}N_{n,\varepsilon}^{\tilde{N}}\left(C_{n+1,\varepsilon}^{\tilde{N}}-\rho_{n+1}\right)^{+} dx }-\beta \int_{\Omega}C_{n+1,\varepsilon}^{\tilde{N}}\left(C_{n+1,\varepsilon}^{\tilde{N}}-\rho_{n+1}\right)^{+}\; dx $$
$$\hspace{-3em} \displaystyle+\int_{\Omega_c}f_{n+1}^{\tilde{N}}\left(C_{n+1,\varepsilon}^{\tilde{N}}-\rho_{n+1}\right)^{+}\; dx \, .$$
Therefore, we get
\begin{equation*}
\begin{array}{ll}
\medskip
\displaystyle{ \frac{1}{h} \int_{\Omega}\left(C_{n+1,\varepsilon}^{\tilde{N}}-\rho_{n+1}\right)\left(C_{n+1,\varepsilon}^{\tilde{N}}-\rho_{n+1}\right)^{+}dx} = \displaystyle \frac{1}{h} \int_{\Omega}\left(C_{n,\varepsilon}^{\tilde{N}}-\rho_{n+1}\right)\left(C_{n+1,\varepsilon}^{\tilde{N}}-\rho_{n+1}\right)^{+}\; dx\\
\hspace{4cm} \displaystyle -\int_{\Omega}\abs{\nabla(C_{n+1,\varepsilon}^{\tilde{N}}-\rho_{n+1})^{+}}^{2}dx{+\alpha \int_{\Omega}N_{n,\varepsilon}^{\tilde{N}}\left(C_{n+1,\varepsilon}^{\tilde{N}}-\rho_{n+1}\right)^{+} dx } \\
\hspace{4cm} \displaystyle  -\beta \int_{\Omega}C_{n+1,\varepsilon}^{\tilde{N}}\left(C_{n+1,\varepsilon}^{\tilde{N}}-\rho_{n+1}\right)^{+}dx+\int_{\Omega_c}f_{n+1}^{\tilde{N}}\left(C_{n+1,\varepsilon}^{\tilde{N}}-\rho_{n+1}\right)^{+}\; dx \, .
\end{array}
\end{equation*}
Using now that $\rho_{n+1}=\rho_n +\left(\alpha+\norm{f}_{L^{\infty}(Q_T)}\right)$, the last equation gives
\begin{equation*}
{ \frac{1}{h} \int_{\Omega}\left(\left(C_{n+1,\varepsilon}^{\tilde{N}}-\rho_{n+1}\right)^{+}\right)^{2} dx\leqslant+ \frac{1}{h} \int_{\Omega}\left(C_{n,\varepsilon}^{\tilde{N}}-\rho_{n}\right)\left(C_{n+1,\varepsilon}^{\tilde{N}}-\rho_{n+1}\right)^{+}\; dx} 
\end{equation*}
$$\hspace{7.7em}+\alpha \int_{\Omega}\left(N_{n,\varepsilon}^{\tilde{N}}-1\right)\left(C_{n+1,\varepsilon}^{\tilde{N}}-\rho_{n+1}\right)^{+} dx+\int_{\Omega}\left(f_{n+1}^{\tilde{N}}-\norm{f}_{L^\infty(Q_T)}\right)\left(C_{n+1,\varepsilon}^{\tilde{N}}-\rho_{n+1}\right)^{+}\; dx $$ 
$$\hspace{-15em}-\beta \int_{\Omega}C_{n+1,\varepsilon}^{\tilde{N}}\left(C_{n+1,\varepsilon}^{\tilde{N}}-\rho_{n+1}\right)^{+}\; dx  .$$
Since $\alpha$ and $\beta $  are positives,\  $N_{n+1,\varepsilon}^{\tilde{N}} \leq 1  \mbox{ a.e. }  x \in \Omega \,  , \forall n=0,..,\tilde{N}-1$, and $f_{n+1}^{\tilde{N}}\leq \norm{f}_{L^{\infty}(Q_T)}$, and according to the non-negativity of the second term of the left-hand-side, one can deduce that $C_{n+1}^{\tilde{N}}\leq \rho_{n+1}\leq \rho, \mbox{for all} \ n\in\{0,...,N\}.$ We take then $\rho= {\norm{C_0}}_{\infty}+T\left(\alpha+\norm{f}_{L^{\infty(Q_T)}}\right)$ and get $C_{n+1, \varepsilon}^{\tilde{N}} \leqslant \rho$.
\end{remarque}
\subsection{Existence of a weak solution of System \eqref{eqn}-\eqref{eqc} }
Since the term {$(f_{n+1}^{\tilde{N}})^{-}C_{n+1,\varepsilon}^{\tilde{N}}$} is non-negative, the bilinear form in the left-hand side of Equation \eqref{eqc} is coercive.
The Lax-Milgram theorem gives 
the existence and uniqueness of the discrete solution $C_{n+1,\varepsilon}^{\tilde{N}}$ to Equation \eqref{eqc}.
The Schauder fixed point theorem is the main tool to prove the existence of the $N_{n+1,\varepsilon}^{\tilde{N}}$ of Equation \eqref{eqn}.
This method is used in \cite{chamoun2014coupled} and it is easy to adopt the proof in our case, thus we omit it.\\

\noindent The following proposition presents a crucial result, where a priori estimates on the interpolation of $N^{\tilde{N}}$ and $C^{\tilde{N}}$ with respect to $\tilde{N}$ are obtained.
\begin{proposition}
There exist positive constants $A'$ and $A''$ independent of $\tilde{N}$ such that
\begin{align}
& \big|\big|\Pi_{\tilde{N}}^{0}  N_{\varepsilon}^{\tilde{N}}\big|\big|_{L^{\infty}(Q_{T})} = \max_{n=1...N} \big|\big|N_{n,\varepsilon}^{\tilde{N}}\big|\big|_{L^{\infty}(\Omega)} \leq 1, \qquad \big|\big|\Pi_{\tilde{N}}^{0} C_{\varepsilon}^{\tilde{N}}\big|\big|_{L^{\infty}(Q_{T})}  \leq M, \label{a}\\
&\big|\big|\Pi_{\tilde{N}}^{0} N_{\varepsilon}^{\tilde{N}}\big|\big|_{L^{2}(0,T;(H^{1}(\Omega)))} \leq A', \qquad \big|\big|\Pi_{\tilde{N}}^{0} C_{\varepsilon}^{\tilde{N}}\big|\big|_{L^{2}(0,T;(H^{1}(\Omega)))} \leq A'' \, ,\label{b,c}\\
&||\tau_{h'}\Pi_{\tilde{N}}^{0} N_{\varepsilon}^{\tilde{N}}-\Pi_{\tilde{N}}^{0} N_{\varepsilon}^{\tilde{N}}||_{L^{2}(0,T-h';(L^{2}(\Omega)))} \leq A'h'\, , \;\;
 \forall h'> 0, \label{V3}\\
 &||\tau_{h'}\Pi_{\tilde{N}}^{0} C_{\varepsilon}^{\tilde{N}}-\Pi_{\tilde{N}}^{0} C_{\varepsilon}^{\tilde{N}}||_{L^{2}(0,T-h';(L^{2}(\Omega)))} \leq A'h' \, ,\;\;  \forall h'> 0, \label{V4}\\
&\Big|\Big|\frac{\partial}{\partial t} (\Pi_{\tilde{N}}^{1} N_{\varepsilon}^{\tilde{N}})\Big|\Big|_{L^{2}(0,T;(H^{1}(\Omega))')} \leq A', \qquad  \Big|\Big|\frac{\partial}{\partial t} (\Pi_{\tilde{N}}^{1} C_{\varepsilon}^{\tilde{N}})\Big|\Big|_{L^{2}(0,T;(H^{1}(\Omega))')} \leq A''\, ,\label{but} \\
& \big|\big|\Pi_{\tilde{N}}^{1} N_{\varepsilon}^{\tilde{N}}-\Pi_{\tilde{N}}^{0} N_{\varepsilon}^{\tilde{N}}\big|\big|_{L^{2}(0,T;(H^{1}(\Omega))')} \leq \frac{A'h^{2}}{3} \, .\label{L3}
\end{align}
\end{proposition}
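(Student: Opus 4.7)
The bound \eqref{a} is the direct restatement of the discrete maximum principle of Proposition \ref{maxprinciple} via the identity \eqref{infini}. All remaining estimates rely on testing the scheme \eqref{eqn}--\eqref{eqc} with appropriate functions and exploiting the algebraic identity $(u-v)\,u = \tfrac{1}{2}(u^2 - v^2) + \tfrac{1}{2}(u - v)^2$, together with the non-degeneracy $a_\varepsilon(s) \geq \varepsilon$. Consequently the constants $A'$ and $A''$ will depend on $\varepsilon$ (which is acceptable at this stage) but not on $\tilde{N}$.

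For the $H^1$-bounds \eqref{b,c}, the plan is to choose $\psi_1 = N_{n+1,\varepsilon}^{\tilde{N}}$ in \eqref{eqn} and $\psi_2 = C_{n+1,\varepsilon}^{\tilde{N}}$ in \eqref{eqc}. The convective coupling $\int_\Omega \chi(N_{n+1,\varepsilon}^{\tilde{N}})\,\nabla C_{n+1,\varepsilon}^{\tilde{N}} \cdot \nabla N_{n+1,\varepsilon}^{\tilde{N}}\,dx$ is controlled by Young's inequality (using that $\chi$ is continuous, hence bounded, on $[0,1]$), splitting it between $\varepsilon\|\nabla N_{n+1,\varepsilon}^{\tilde{N}}\|_{L^2}^2$ and a small fraction of $\|\nabla C_{n+1,\varepsilon}^{\tilde{N}}\|_{L^2}^2$. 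In the $C$-equation, the explicit/implicit splitting of $f$ makes the term $-(f_{n+1}^{\tilde{N}})^- \,|C_{n+1,\varepsilon}^{\tilde{N}}|^2$ automatically non-positive, while $(f_{n+1}^{\tilde{N}})^+\, C_{n,\varepsilon}^{\tilde{N}}\, C_{n+1,\varepsilon}^{\tilde{N}}$ and the reaction $\alpha N - \beta C$ are dominated using \eqref{a} and $f \in L^\infty$. Summation over $n$, a discrete Gronwall step, and \eqref{defpi} then yield \eqref{b,c}.

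The discrete time-derivative bounds \eqref{but} are next obtained by testing each equation with an arbitrary $\psi \in H^1(\Omega)$ of unit norm, applying Cauchy--Schwarz to the diffusion, convection and reaction integrals, and then square-summing against $h$; the already proven \eqref{b,c} controls the resulting right-hand side. The time-translate estimates \eqref{V3}--\eqref{V4} follow from a standard telescoping / duality argument, writing $\tau_{h'}\Pi^0 v - \Pi^0 v$ as a sum of consecutive jumps $v_{k+1}-v_k$ and using the pairing $\|v_{k+1}-v_k\|_{L^2}^2 \leq \|v_{k+1}-v_k\|_{H^1}\,\|v_{k+1}-v_k\|_{(H^1)'}$ before summation. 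Finally, \eqref{L3} is a direct computation: on $[nh,(n+1)h]$ one has $\Pi^1_{\tilde{N}} v - \Pi^0_{\tilde{N}} v = (n+1-t/h)(v_n - v_{n+1})$, so time-integration yields $\|\Pi^1 v - \Pi^0 v\|_{L^2(0,T;(H^1)')}^2 = \tfrac{h}{3}\sum_n \|v_{n+1}-v_n\|_{(H^1)'}^2$, and this sum is itself $\leq h\,(A')^2$ by \eqref{but}.

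The main obstacle is the coupled energy estimate \eqref{b,c}: the simultaneous presence of the non-linear drift $\chi(N)\nabla C$ and of the bilinear source $fC$ requires a careful choice of the Young parameters so that both $\varepsilon \|\nabla N\|_{L^2}^2$ and $\|\nabla C\|_{L^2}^2$ remain with strictly positive coefficients on the left-hand side, and so that the resulting discrete Gronwall argument closes uniformly in $\tilde{N}$. In particular, the cross-level term $(f_{n+1}^{\tilde{N}})^+\, C_{n,\varepsilon}^{\tilde{N}}\, C_{n+1,\varepsilon}^{\tilde{N}}$ couples two consecutive time steps and must be decoupled via the elementary splitting $|C_n C_{n+1}| \leq \tfrac{1}{2}|C_n|^2 + \tfrac{1}{2}|C_{n+1}|^2$, both of which can then be absorbed by the telescoping $L^2$-terms before the Gronwall step is invoked.
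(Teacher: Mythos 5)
Your proposal is correct, and it is essentially the argument the paper relies on: the paper itself gives no proof of this proposition, stating only that "all the estimates and the compactness arguments are then easily deduced by following the same guideline of \cite{chamoun2014coupled}", and the semi-discrete energy method you describe — \eqref{a} from Proposition \ref{maxprinciple} together with \eqref{infini}, testing \eqref{eqn}--\eqref{eqc} with $N_{n+1,\varepsilon}^{\tilde{N}}$ and $C_{n+1,\varepsilon}^{\tilde{N}}$, duality for the translates, and the exact $h/3$ computation for \eqref{L3} — is precisely the route of that reference. Two minor remarks. First, the balancing of Young parameters you flag as the "main obstacle" is simpler than you anticipate: equation \eqref{eqc} contains no $\nabla N_{n+1,\varepsilon}^{\tilde{N}}$ and all its zero-order terms are already controlled by \eqref{a}, so the $C$-part of \eqref{b,c} closes on its own; one then feeds $h\sum_n\|\nabla C_{n+1,\varepsilon}^{\tilde{N}}\|_{L^2(\Omega)}^2$ into the $N$-equation, where the convective term is absorbed against $\varepsilon\|\nabla N_{n+1,\varepsilon}^{\tilde{N}}\|_{L^2(\Omega)}^2$, yielding an $A'$ that depends on $1/\varepsilon$ — which, as you correctly note, is all that is claimed (independence of $\tilde{N}$ only). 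Second, the duality argument you outline for \eqref{V3}--\eqref{V4} actually bounds the \emph{squared} $L^2(0,T-h';L^2(\Omega))$ norm by $A'h'$ (i.e.\ the norm by $A'\sqrt{h'}$), and likewise \eqref{L3} comes out as $\frac{h}{\sqrt{3}}A'$; this is the form needed for the Kolmogorov--Simon compactness step and matches the cited reference, so the discrepancy with the rates as literally printed in the proposition is an imprecision of the statement, not a gap in your argument.
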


\noindent Consequently, there exist sub-sequences of $(\Pi_{\tilde{N}}^{0} C_{\varepsilon}^{\tilde{N}})_{\tilde{N}}$ and $(\Pi_{\tilde{N}}^{0} N_{\varepsilon}^{\tilde{N}})_{\tilde{N}}$ still denoted by $(\Pi_{\tilde{N}}^{0} C_{\varepsilon}^{\tilde{N}})_{\tilde{N}}$ and $(\Pi_{\tilde{N}}^{0} N_{\varepsilon}^{\tilde{N}})_{\tilde{N}}$, and functions $N_{\varepsilon}$ and $C_{\varepsilon}$ such that
\begin{align}
&\Pi_{\tilde{N}}^{0} C_{\varepsilon}^{\tilde{N}} \rightharpoonup C_{\varepsilon} \mbox{ and } \Pi_{\tilde{N}}^{0} N_{\varepsilon}^{\tilde{N}} \rightharpoonup N_{\varepsilon} \mbox{ weakly* in } L^{\infty}(Q_{T}) \, , \label{L} \\
&\Pi_{\tilde{N}}^{0} C_{\varepsilon}^{\tilde{N}} \rightharpoonup C_{\varepsilon} \mbox{ and } \Pi_{\tilde{N}}^{0} N_{\varepsilon}^{\tilde{N}} \rightharpoonup N_{\varepsilon} \mbox{ weakly in } L^{2}(0,T;H^{1}(\Omega))\, ,\label{est:c} \\
&\Pi_{\tilde{N}}^{0} C_{\varepsilon}^{\tilde{N}} \longrightarrow C_{\varepsilon} \mbox{ and } \Pi_{\tilde{N}}^{0} N_{\varepsilon}^{\tilde{N}} \longrightarrow N_{\varepsilon} \mbox{ strongly in } L^{2}(Q_{T}) \mbox{ and a.e. in $Q_{T}$ },\label{imp}\\
&\frac{\partial}{\partial t} \Big(\Pi_{\tilde{N}}^{1} C_{\varepsilon}^{\tilde{N}}\Big) \rightharpoonup \frac{\partial C_{\varepsilon}}{\partial t} \mbox{ and } \frac{\partial}{\partial t} \Big(\Pi_{\tilde{N}}^{1} N_{\varepsilon}^{\tilde{N}}\Big) \rightharpoonup \frac{\partial N_{\varepsilon}}{\partial t}  \mbox{ weakly in } L^{2}(0,T;(H^{1}(\Omega))')\, ,\label{derive}\\
&\Pi_{\tilde{N}}^{1} C_{\varepsilon}^{\tilde{N}} \rightharpoonup C_{\varepsilon} \mbox{ and } \Pi_{\tilde{N}}^{1} N_{\varepsilon}^{\tilde{N}} \rightharpoonup N_{\varepsilon}  \mbox{ weakly* in } L^{\infty}(Q_{T})\, ,\label{L1}
\end{align}
as $\tilde{N} \longrightarrow +\infty$.\\
\noindent Moreover, $N_{\varepsilon}$ and $C_{\varepsilon}$ verify
\begin{align}
&N_{\varepsilon}(0,x)=N_{0,\varepsilon}(x) \mbox { and } C_{\varepsilon}(0,x)=C_{0,\varepsilon}(x)\,  a.e. \, ,\label{L5}\\
& 0 \leq N_{\varepsilon}(t,x) \leq 1 \mbox{ and } C_{\varepsilon}(t,x) \geq 0 \mbox{ for a.e. } (t,x) \in (0,T) \times \Omega = Q_{T} \, .\label{L6}
\end{align}
 All the estimates and the compactness arguments are then easily deduced by following the same guideline of \cite{chamoun2014coupled}. The passage to the limit is very classical, we show only the convergence of the control terms
 $$
 I_h= \int_0^T \int_{\Omega_c}{ \Pi_{\tilde{N}}^{0}(f^{\tilde{N}})^{+}\ (\tau_{h} \Pi_{\tilde{N}}^{0} C_{\varepsilon}^{\tilde{N}}) \ \psi_{2}} \; dx dt,
- \int_0^T \int_{\Omega_c}{ \Pi_{\tilde{N}}^{0}(f^{\tilde{N}})^{-}\ ( \Pi_{\tilde{N}}^{0} C_{\varepsilon}^{\tilde{N}}) \ \psi_{2}} \; dx dt \,.
 $$
In fact, 
\begin{equation*}
\begin{split}
&I_h- \Big(\int_0^T\!\!\!\! \int_{\Omega_c}{ f\ C_{\varepsilon} \psi_{2}} \; dx dt \Big)
\\
&\qquad =\displaystyle\int_0^T\!\!\!\!\int_{\Omega_c}\big(\; {(\Pi_{\tilde{N}}^{0}(f^{\tilde{N}})^{+}\tau_{h}\Pi_{\tilde{N}}^{0}C_{\varepsilon}^{\tilde{N}}-f^{+}C_{\varepsilon})-(\Pi_{\tilde{N}}^{0}(f^{\tilde{N}})^{-}\Pi_{\tilde{N}}^{0}C_{\varepsilon}^{\tilde{N}}-f^{-}C_{\varepsilon})\big)
\; \psi_{2}}\; dx dt.
\\
& \qquad \leqslant \norm  {(\Pi_{\tilde{N}}^{0}(f^{\tilde{N}})^{+} -f^{+}) \psi_{2}}_{L^2(Q_T)} \norm{\tau_{h}\Pi_{\tilde{N}}^{0}C_{\varepsilon}^{\tilde{N}}}_{L^2(Q_T)}+ \norm  {(\tau_h \Pi_{\tilde{N}}^{0}C_{\varepsilon}^{\tilde{N}} -C_{\varepsilon}) \psi_{2}}_{L^2(Q_T)} \norm{f^{+}}_{L^2(Q_T)}
\\
&\qquad 
\quad+\norm  {(\Pi_{\tilde{N}}^{0}(f^{\tilde{N}})^{-} -f^{-}) \psi_{2}}_{L^2(Q_T)} \norm{\Pi_{\tilde{N}}^{0}C_{\varepsilon}^{\tilde{N}}}_{L^2(Q_T)}+ \norm  {( \Pi_{\tilde{N}}^{0}C_{\varepsilon}^{\tilde{N}} -C_{\varepsilon}) \psi_{2}}_{L^2(Q_T)} \norm{f^{-}}_{L^2(Q_T)}.
\end{split}
\end{equation*}
By employing \eqref{V4} and \eqref{imp}, we obtain the strong convergence of $\tau_{h}\Pi_{\tilde{N}}^{0}C_{\varepsilon}^{\tilde{N}}$ to $C_{\varepsilon}$ in $L^{2}(Q_{T})$. Therefore, by using \eqref{cv fort chakel f} and since $\tau_{h}\Pi_{\tilde{N}}^{0}C_{\varepsilon}^{\tilde{N}}$ and   $\Pi_{\tilde{N}}^{0}f^{\tilde{N}}$  are in $L^2(Q_T)$, we can deduce that 
$$I_h \longrightarrow   \int_0^T\int_{\Omega_c}{ f\ C_{\varepsilon} \psi_{2}} \; dx dt .$$

Finally, using the fact that $f$ is bounded and under assumption \eqref{unicite}, we can show the uniqueness of Solution of System \eqref{CNS}-\eqref{ci} following the same techniques of \cite{chamoun2014coupled}.

\section{Optimal control problem}

This section is devoted to show the existence of optimal control and the construction of the adjoint problem using the Lagrangian
function. Furthermore, we prove the existence of a weak solution for the bilinear control problem.\\

\noindent Suppose that  $f \in\mathcal{F}=\{L^\infty([0,T]\times \Omega), \mbox{ such that }\,  \mbox{for all}\ (t,x)\in [0,T]\times \Omega,  f_{\min} \leq f(t,x)\leq f_{\max} \ $\}, 
and consider $0 \leq N_0 \leq 1, C_0 \in L^{\infty}(\Omega)$ with $C_0\geq 0$ in $\Omega$. The function $f\in \mathcal{F}$ describes the bilinear control acting on the $C$-equation. We start now by proving the existence of a solution for the following optimal control problem:
\begin{equation}\label{control problem} 
   \left\{\begin{array}{l}\text { Find }(N, C, f) \in \mathcal{W}\times \mathcal{X}\times \mathcal{F} \text { minimizing the functional } \\[0.5em] \displaystyle J(N, C, f):=\frac{\gamma_N}{2} \int_0^T\left\|N(t)-N_d(t)\right\|_{L^2(\Omega)}^2 \; d t +\frac{\gamma_C}{2} \int_0^T\left\|C(t)-C_d(t)\right\|_{L^2(\Omega)}^2 \; d t+\frac{\gamma_f}{2} \int_0^T\|f(t)\|_{L^{2}\left(\Omega_c\right)}^{2}\;  d t \\[0.7em] \text { subject to }(N, C ) \text { be a weak solution of the PDE system \eqref{CNS}-\eqref{ci}.  in the sense of Definition \eqref{solution faible}} \\ \text{and}\  f\in \mathcal{F} \, ,\end{array}\right. 
\end{equation}
 
\noindent where $\mathcal{W}=\{  N \in L^2(Q_T), \, \partial_{t}N \in  L^{2}(0,T;(H^{1}( \Omega))^{'}\}$, $\mathcal{X}=\{C \in  L^{2}(0,T;H^{1}( \Omega)) \,  \partial_{t}C \in  L^{2}(0,T;(H^{1}( \Omega))')\} $,\ $\left(N_d, C_d\right) \in L^2(Q_{T})^2$ represent the target states and the non- negative numbers $\gamma_N, \gamma_C$ and $\gamma_f$ measure the cost of the states and control, respectively.

\noindent The functional $J$ defined in \eqref{control problem} characterizes  the deviation between the cell density $N$ and the chemical concentration $C$ from their respective target values $N_d$ and $C_d$ and  the cost of the control $f$ measured in the $L^2- $norm.
\subsection{Existence of control}
In this subsection, we prove that there exists a control of \eqref{control problem} using a method inspired from \cite{braz2022bilinear}. 
\begin{theoreme}
 Assume that $0\leq  N_0\leq 1$, $\mathbf{C}_0 \in L^{\infty}\left(\Omega\right), N_d \in L^2(Q_T)$, $C_d \in L^2(Q_T)$  and $f\in \mathcal{F}$. Then, there exists a solution of the optimal control problem \eqref{control problem}. 
\end{theoreme}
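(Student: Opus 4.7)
The plan is to apply the direct method of the calculus of variations. Since $\mathcal{F}$ is non-empty (for instance $f\equiv 0$) and Theorem \ref{th:existence} guarantees, for every admissible $f\in\mathcal{F}$, the existence of a weak solution $(N,C)$ with $0\le N\le 1$ and $0\le C\le M$, the admissible set of \eqref{control problem} is non-empty. Moreover $J\ge 0$, so the infimum $\mu:=\inf J$ is finite. I would then pick a minimizing sequence $(N_m,C_m,f_m)\in\mathcal{W}\times\mathcal{X}\times\mathcal{F}$ with $J(N_m,C_m,f_m)\to\mu$, and aim to extract a subsequence converging in a mode strong enough to pass to the limit in every term of the weak formulation \eqref{nhn}--\eqref{chc}.

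First I would collect uniform estimates. The bound $f_{\min}\le f_m\le f_{\max}$ implies $\|f_m\|_{L^\infty(Q_T)}\le K$ independently of $m$. The proof of Proposition \ref{maxprinciple} and the a priori estimates obtained while building the solutions in Section 3 depend on the data only through $\|f\|_{L^\infty}$, $\|N_0\|_{L^\infty}$ and $\|C_0\|_{L^\infty}$; therefore $(N_m,C_m)$ is uniformly bounded in $L^\infty(Q_T)$, in $L^2(0,T;H^1(\Omega))$ (using $A(N_m)$ for the cell component), with $\partial_t N_m$ and $\partial_t C_m$ uniformly bounded in $L^2(0,T;(H^1(\Omega))')$. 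By Banach–Alaoglu and the Aubin–Lions lemma, up to a subsequence,
\begin{equation*}
f_m\rightharpoonup^{*} \bar f \text{ in } L^\infty(Q_T),\quad
N_m\rightharpoonup \bar N,\ C_m\rightharpoonup \bar C \text{ in } L^2(0,T;H^1(\Omega)),\quad
N_m\to\bar N,\ C_m\to\bar C \text{ strongly in } L^2(Q_T) \text{ and a.e.,}
\end{equation*}
with $\partial_t N_m\rightharpoonup\partial_t\bar N$ and $\partial_t C_m\rightharpoonup\partial_t\bar C$ in $L^2(0,T;(H^1)')$. The admissible bounds pass to the limit pointwise, so $f_{\min}\le \bar f\le f_{\max}$, i.e.\ $\bar f\in\mathcal{F}$, and $0\le\bar N\le 1$, $\bar C\ge 0$.

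Next I would show that $(\bar N,\bar C)$ is the weak solution of \eqref{CNS}--\eqref{ci} associated with $\bar f$. The linear terms pass to the limit immediately from the weak convergences. For the nonlinear diffusion and chemotactic terms, the a.e.\ convergence of $N_m$ together with the continuity of $a$ and $\chi$ and the dominated convergence theorem give $a(N_m)\nabla\psi_1\to a(\bar N)\nabla\psi_1$ and $\chi(N_m)\nabla\psi_1\to\chi(\bar N)\nabla\psi_1$ strongly in $L^2(Q_T)$, which combined with the weak convergences of $\nabla N_m$ and $\nabla C_m$ handles those terms. The one delicate term is the bilinear product $f_m C_m\mathbf{1}_{\Omega_c}$: I would use the strong convergence of $C_m$ to $\bar C$ in $L^2(Q_T)$ and the weak-$*$ convergence of $f_m$ in $L^\infty(Q_T)$. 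For any test function $\psi_2\in L^2(0,T;H^1(\Omega))$, write
\begin{equation*}
\iint_{Q_c}f_m C_m\,\psi_2\,dx\,dt-\iint_{Q_c}\bar f\,\bar C\,\psi_2\,dx\,dt
=\iint_{Q_c}f_m(C_m-\bar C)\psi_2\,dx\,dt+\iint_{Q_c}(f_m-\bar f)\bar C\psi_2\,dx\,dt;
\end{equation*}
the first integral goes to zero by the $L^\infty$ bound on $f_m$ and the strong convergence $C_m\to\bar C$ in $L^2$, and the second by weak-$*$ convergence of $f_m$ since $\bar C\psi_2\in L^1(Q_c)$. This shows $(\bar N,\bar C,\bar f)$ is admissible. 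The main obstacle I anticipate is precisely this bilinear passage, because weak$\times$weak convergence alone would not suffice; having the strong $L^2$ convergence of $C_m$ from Aubin–Lions is essential.

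Finally, each term of $J$ is convex and continuous (in fact a squared norm) in its argument, hence weakly lower semicontinuous. Combining the weak convergences $N_m\rightharpoonup\bar N$, $C_m\rightharpoonup\bar C$ in $L^2(Q_T)$ and $f_m\rightharpoonup^{*}\bar f$ in $L^\infty(Q_T)$ (which in particular implies weak convergence in $L^2(Q_c)$) yields
\begin{equation*}
J(\bar N,\bar C,\bar f)\le\liminf_{m\to\infty}J(N_m,C_m,f_m)=\mu.
\end{equation*}
Since $(\bar N,\bar C,\bar f)$ is admissible, the reverse inequality $J(\bar N,\bar C,\bar f)\ge\mu$ is automatic, so equality holds and $(\bar N,\bar C,\bar f)$ is an optimal triple, which concludes the proof.
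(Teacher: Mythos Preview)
Your approach is essentially the same as the paper's: the direct method of the calculus of variations, extracting weakly convergent subsequences from a minimizing sequence, using compactness to get strong $L^2$ convergence of the states, passing to the limit in the weak formulation (in particular handling the bilinear term $f_mC_m$ via strong convergence of $C_m$ against weak convergence of $f_m$), and concluding by weak lower semicontinuity of $J$. The paper's proof is terser and cites \cite{chamoun2014coupled} for the compactness step, but the structure is identical.

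One point to tighten. You write that $N_m\rightharpoonup\bar N$ in $L^2(0,T;H^1(\Omega))$ and then invoke ``the weak convergences of $\nabla N_m$'' to pass to the limit in the diffusion term. In the two-sidedly degenerate setting this is not available: the a priori estimates only give $A(N_m)=\int_0^{N_m}a(r)\,dr$ bounded in $L^2(0,T;H^1(\Omega))$, not $N_m$ itself (cf.\ Definition \ref{solution faible}). The correct passage is to observe $a(N_m)\nabla N_m=\nabla A(N_m)$, which is bounded in $L^2(Q_T)$; since $N_m\to\bar N$ a.e.\ and $A$ is continuous and bounded, $A(N_m)\to A(\bar N)$ strongly in $L^2(Q_T)$, whence $\nabla A(N_m)\rightharpoonup\nabla A(\bar N)$ weakly in $L^2(Q_T)$, and the diffusion term passes to the limit. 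For the same reason, the compactness of $(N_m)$ in $L^2(Q_T)$ does not follow from a direct Aubin--Lions argument on $N_m$ (you lack the $H^1$ bound); it comes instead from the time-translation estimate \eqref{V3} together with the space regularity of $A(N_m)$, as in Section~3. Your treatment of the chemotactic term and of $f_mC_m$ is fine as written.
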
 

\begin{proof}
Denote the set of admissible solutions of \eqref{control problem} as 
$$ \displaystyle \mathcal{S}_{a d}=\left\{s=(N, C, f) \in \mathcal{W} \times \mathcal{X} \times \mathcal{F}: s\right.\text{ is a weak solution of}\ \eqref{CNS}-\eqref{ci}\ \mbox{in} \left.(0, T)\times\Omega\right\} \, .$$
Due to Theorem \ref{th:existence}, $\mathcal{S}_{\text {ad }}$ is non-empty. The cost functional is non-negative, and therefore it has a greatest lower bound. Thus, there exists a minimizing sequence $\left\{s_{m}\right\}_{m \in \mathbb{N}} \subset \mathcal{S}_{\text {ad }}$ such that $ \displaystyle \lim\limits _{m \rightarrow+\infty} J\left(s_{m}\right)=\inf _{s \in \mathcal{S}_{\text {ad }}} J(s)$. 

\noindent 
As $f_m \in \mathcal{F}$ and due to the definition of $\mathcal{F}$, then $f_{m}$ is bounded in $L^2(Q_T)$ and there exists a subsequence still denoted $f_m$ such that it converges weakly to a candidate solution $\tilde{f}$ in $L^2(Q_{T})$.
%
Furthermore, the same guidelines of \cite{chamoun2014coupled}, Section $4$ with $(N_m,C_m)$ instead of $\displaystyle (N_\varepsilon, C_\varepsilon)$ imply the strong convergence of $N_m$ and $C_m$ to a weak solution ($\tilde{N}$, $\tilde{C}$) of \eqref{CNS}-\eqref{ci} in sense of definition \ref{solution faible}. This is supported by the observation that
$$
f_{m} C_{m} 1_{\Omega_{c}}  \rightharpoonup\quad \tilde{f} \tilde{C} 1_{\Omega_{c}} \quad \text { weakly in } L^{2}(Q_{T}).
$$

\noindent Consequently, $\tilde{s}=(\tilde{N},\tilde{C},\tilde{f})$ is a weak solution of the system \eqref{CNS}-\eqref{cl}-\eqref{ci}, 
then $\tilde{s} \in \mathcal{S}_{\text {ad }}$. Hence,

\begin{equation}\label{semi cont}
  \displaystyle \lim _{m \rightarrow+\infty} J\left(s_{m}\right)=\inf _{s \in \mathcal{S}_{ad}} J(s) \leq J(\tilde{s}) . 
\end{equation}

\noindent On the other hand, since any norm of a Banach space is weakly lower semi-continuous then $J$ is lower semi-continuous on $\mathcal{S}_{a d}$. One has $\displaystyle J(\tilde{s}) \leq \lim\limits_{m \rightarrow+\infty}  \inf J\left(s_{m}\right)$. Finally, we get that $\tilde{s}$ is a global optimal control.
\end{proof}
\subsection{Optimality conditions and dual problem}
In this subsection, we derive the optimality conditions based on the Lagrangian formulation defined from Equations \eqref{CNS} and \eqref{control problem} as follows:
\begin{equation*}
\begin{array}{rcl}
\medskip
L\left( N, C, p, q,\lambda, \Lambda , f\right)
&=&\displaystyle  \frac{\gamma_N}{2}\int_{Q_{T}}\left|N-N_{d}\right|^{2}\; dx  dt+\frac{\gamma_{C}}{2} \int_{Q_{T}}\left|C-C_{d}\right|^{2} \; dx  dt +\frac{\gamma_{f}}{2} \int_{0}^{T}\int_{\Omega_c}|f|^{2} \; dx  dt \\
\medskip
&& \displaystyle-\int_{Q_{T}} \partial_{t} N\  p \; dx  dt+\int_{Q_{T}} \nabla \cdot\left(a\left(N\right) \nabla N\right) p \; dx  dt -\int_{Q_{T}} \nabla \cdot\left(\chi\left(N\right) \nabla C\right) p\; dx  dt\\
\medskip
&&\displaystyle-\int_{\Omega} \lambda (N(0,x)-N_{0}) \; dx-\int_{Q_{T}} \partial_{t} C\ q \; dx dt  +\int_{Q_{T}} \Delta C\ q \; dx  dt\\
\medskip
&&\displaystyle +\int_{Q_{T}}\alpha\ N\ q \; dx  dt  -\int_{Q_{T}}\beta\ C\ q \; dx  dt + \int_{0}^{T}\int_{\Omega_c}   f\ C\ q \; dx  dt\\
\medskip
&&\displaystyle-\int_{\Omega} \Lambda (C(0,x)-C_{0}) \; dx.
\end{array}
\end{equation*}
Integrated by part and using \eqref{ci}, we get
\begin{equation*}
\begin{array}{rcl}
L\left( N, C, p, q,\lambda, \Lambda , f\right)&=& \displaystyle \frac{\gamma_N}{2}\int_{Q_{T}}\left|N-N_{d}\right|^{2} \; dx  dt+\frac{\gamma_{C}}{2} \int_{Q_{T}}\left|C-C_{d}\right|^{2} \; dx  dt +\frac{\gamma_{f}}{2} \int_{0}^{T}\int_{\Omega_c}|f|^{2} \; dx  dt \\
&& \displaystyle-\int_{Q_{T}} \partial_{t} N\ p \; dx  dt+\int_{Q_{T}} \nabla \cdot\left(a\left(N\right) \nabla p\right) N \; dx  dt-\int_{\Sigma_{T}} N \left(a\left(N\right)\nabla p\right)  \cdot \eta \; d\sigma(x)  dt \\
&&\displaystyle-\int_{Q_{T}} \nabla \cdot\left(\chi\left(N\right) \nabla p\right) C \; dx  dt+\int_{\Sigma_{T}} C \left(\chi\left(N\right)\nabla p\right)\cdot \eta  \; d\sigma(x)  dt -\int_{Q_{T}} \partial_{t} C\ q \; dxdt \\ \medskip
&& \displaystyle+\int_{Q_{T}} \Delta q\ C \; dx  dt-\int_{\Sigma_{T}}  C   \nabla q\cdot \eta \;   d\sigma(x) dt +\int_{Q_{T}}\alpha\ N\ q \; dx  dt-\int_{Q_{T}}\beta\ C\ q \; dx  dt   \\
&&\displaystyle + \int_{0}^{T}\int_{\Omega_c}   f\ C \ q \; dx  dt-\int_{\Omega} \lambda (N(0,x)-N_{0}) \; dx-\int_{\Omega} \Lambda (C(0,x)-C_{0}) \; dx.
\end{array}  
\end{equation*}
\noindent The first-order optimality system characterizing the adjoint variables is obtained by setting the partial derivatives of the Lagrangian $L\left(N, C, p, q, f\right)$ with respect to $N$ and $C$ equal to zero. Therefore, we have:
\begin{align*}
\left(\frac{\partial L}{\partial N}, \delta N\right) = \int_{Q_{T}}
\Big( \partial_{t} p &+\nabla \cdot\left(a\left(N\right) \nabla p\right)-a^{\prime}(N) \nabla N \cdot \nabla p+\chi^{\prime}(N) \nabla C \cdot \nabla p+\alpha\ q  \\
 &+ \gamma_N (N-N_{d})\Big) \delta N \; dx dt+\int_{\Omega}\big(-p(T)\delta N(T)+p(0)\delta N(0) \big)\; dx
\\
 &-\int_{\Omega} \lambda \delta N(0,x)\; dx- \int_{\Sigma_{T}}\delta N (a\left(N\right)\nabla p\cdot \eta)\;   \; d\sigma(x)  dt=0,
\end{align*}
and 
\begin{align*}
\left(\frac{\partial L}{\partial C}, \delta C\right)=  \int_{Q_{T}}\Big(\partial_{t} q &+ \Delta q-\nabla \cdot\left(\chi\left(N\right) \nabla p\right)-\beta q +fq1_{\Omega}+\gamma_{C}\left(C-C_{d}\right)\big) \delta C \; dx dt\\
 &+ \int_{\Omega}{\big(-q(T)\delta C(T)+q(0)\delta C(0)\big)}\; dx-\int_{\Omega} \Lambda \delta C(0,x)\; dx
\\
&-\int_{\Sigma_{T}}\delta C  (\nabla q\cdot \eta)\; d\sigma(x) +\int_{\Sigma_{T}} \delta C (\chi(N)\nabla p\cdot \eta ) \; dt=0. 
\end{align*}



\noindent Then we get
\begin{equation}\label{adjoint}
  \begin{cases}
\displaystyle\partial_{t} p+\nabla \cdot\left(a\left(N\right) \nabla p\right)-a^{\prime}\left(N\right) \nabla N \cdot \nabla p+\chi^{\prime}\left(N\right) \nabla C \cdot \nabla p+\alpha\ q  + \gamma_N \left(N-N_{d}\right)=0  \text { in } Q_{T}, \\[0.5em]
\displaystyle\partial_{t} q+\Delta q-\nabla \cdot\left(\chi\left(N\right) \nabla p\right)-\beta q +fq1_{\Omega}+\gamma_{C}\left(C-C_{d}\right)=0,
\end{cases}  
\end{equation}

\noindent with the following boundary and final conditions:

\begin{equation} \label{adj bord}
\begin{cases}
\displaystyle a(N)\nabla p\cdot {\eta}=(\nabla q-\chi(N)\nabla p)\cdot {\eta}=0 \text { on } \Sigma_{T}, \\
\displaystyle p(T)=q(T)=0 \text { on } \Omega \, .
\end{cases}   
\end{equation}
Moreover, we have $p(0)=\lambda$ and $q(0)=\Lambda$.

\noindent Systems \eqref{adjoint} and \eqref{adj bord} represent the adjoint problem that corresponds to the given optimal control problem \eqref{control problem}. Furthermore, the optimality condition is obtained by taking  the derivative of $L$ with respect to $f$, thus we get 
$$
\int_{0}^{T}\int_{\Omega_c} (\ \gamma_f f +C\ q\ )(\tilde{f}-f)\; dx dt\geq 0\, , \quad \forall \tilde{f}\in \mathcal{F}.
$$ 

\subsection{Existence of weak solution  of the adjoint problem 
 \eqref{adjoint}-\eqref{adj bord}}
This subsection is devoted to proving the existence of a weak solution of the adjoint system \eqref{adjoint}-\eqref{adj bord}. Initially, we observe that the equations \eqref{adjoint}-\eqref{adj bord} do not conform to the typical form of parabolic evolution equations, particularly regarding the initial condition. Instead of an initial time condition at
$t=0$, there is a final time condition at 
$t=T$. Thus,  
to demonstrate that problem \eqref{adjoint}-\eqref{adj bord} is well-posed, we can transform it into the standard form of a parabolic evolution equation by introducing a change of variables $s=T-t$.\\

\noindent By defining $\tilde{p}(s)=p (t),\;  \tilde{q}(s)=q (t) $, 
the problem satisfied by $\tilde{p}(s)$ and $\tilde{q}(s)$ will be as the standard form:
 \begin{equation}\label{adjoint2}
  \begin{cases}
\displaystyle\partial_{s} \tilde{p}-\nabla \cdot\left(a\left(N\right) \nabla  \tilde{p}\right)+a^{\prime}\left(N\right) \nabla N \cdot \nabla  \tilde{p}-\chi^{\prime}\left(N\right) \nabla C \cdot \nabla \tilde{p}-\alpha\  \tilde{q}  
- \gamma_N \left(N-N_{d}\right)=0 & \text { in } Q_{T},  \\[0.5em]
\displaystyle\partial_{s} \tilde{q}-\Delta  \tilde{q}+\nabla \cdot\left(\chi\left(N\right) \nabla  \tilde{p}\right)+\beta  \tilde{q}-f\tilde{q}1_{\Omega}-\gamma_{C}\left(C-C_{d}\right)=0,
\end{cases}  
\end{equation}
with following boundary and initial  conditions:

\begin{equation}
\label{adj bord2}
\begin{cases}
\displaystyle a(N)\nabla \tilde{p}\cdot{\eta}=(\nabla \tilde{q}-\chi(N)\nabla \tilde{p})\cdot {\eta}=0=0 \text { on } \Sigma_{T},\\
\displaystyle \tilde{p}(0)=\tilde{q}(0)=0 \text { on } \Omega .
\end{cases}
\end{equation}
Thus, establishing the existence of a solution for this system implies the existence of a solution for the adjoint problem\eqref{adjoint}-\eqref{adj bord}. \\

\noindent Before proving the global existence of a weak solution, we need to discuss two subsections related to the concept of weak solutions for the modified adjoint system described in \eqref{adjoint2}-\eqref{adj bord2}. 
\\

\noindent The first subsection addresses a specific degenerate problem where $a(N)$ becomes zero at a finite number of points in $  (0,T)\times \Omega$, indicating that $N$ takes the values 1 and 0 at only few points in  $ t\in (0,T)\times\Omega$, to achieve the limit of $\sqrt{a(N)}\nabla \tilde{p}$ for the convergence of the non-degenerate problem. Further 
 details be provided in subsection \ref{subfinit} to establish the existence of a weak solution to problem \eqref{adjoint2}-\eqref{adj bord2} under an additional condition: the strong regularity $\sqrt{a(N}\nabla N \in L^{\infty}(Q_T)$.
\\

\noindent  Moving on to the second subsection, we introduce a new concept regarding the existence of a weak solution for a more general degenerate problem where obtaining the limit $\sqrt{a(N)}\nabla \tilde{p}$ is challenging. In this case, we use the assumptions of the first subsection, add additional conditions:  regularities of the density $N$ and the test function. \\

\noindent Now, let's start by exploring the details of the first subsection.
\subsubsection{Case of Localized Degeneracy
($a(N)$ vanishes at a finite number of points in $(0,T)\times \Omega$)}\label{subfinit}
We start now by giving the definition of a weak solution of system \eqref{adjoint2}-\eqref{adj bord2} .
\begin{definition}\label{def adj}
A pair  $(\tilde{p},\tilde{q})$ is a {\it{\bf weak solution}} of \eqref{adjoint2}-\eqref{adj bord2} if
\begin{align*}
&\tilde{p} \in L^{\infty}(0,T;L^{2}( \Omega)) , \quad \tilde{q} \in L^{\infty}(0,T;L^{2}( \Omega)) \cap L^{2}(0,T;H^{1}( \Omega)),
\\ & \partial_{s}\tilde{p}\in L^{\frac{2p}{p+2}}(0,T;(H^{1}( \Omega))^{\prime}), \quad \partial_{s}\tilde{q}\in L^{2}(0,T;(H^{1}( \Omega))^{\prime})
\end{align*}
\noindent and $(\tilde{p},\tilde{q}$) satisfy
\begin{equation}\label{php}
\begin{array}{ll}
\medskip
\displaystyle{ \int_{0}^T {<\partial_{s}\tilde{p}, \varphi_1>}\; dx ds + \iint_{Q_{T}}{ a(N)\nabla \tilde{p} \cdot \nabla\varphi_1} \; dx ds+\iint_{Q_{T}}{ a'(N) \nabla N\cdot \nabla \tilde{p} \varphi_1 }} \; dx ds\\
\hspace{2cm} \displaystyle {-\iint_{Q_{T}}{ \chi'(N) \nabla C\cdot \nabla \tilde{p}\ \varphi_1 } \; dx ds- \iint_{Q_{T}}{ \alpha \tilde{q} \varphi_1 } \; dx ds- \iint_{Q_{T}}{ \gamma_N(N-N_d)\varphi_1 } \; dx ds=   0} , 
\end{array}
\end{equation}
\begin{equation}\label{qhq}
\begin{array}{ll}
\medskip
\displaystyle \int_{0}^T {<\partial_{s} \tilde{q}, \varphi_2>}\; dx ds + \iint_{Q_{T}}{ \nabla \tilde{q} \cdot\nabla\varphi_1} \; dx ds-\iint_{Q_{T}}{\chi (N) \nabla \tilde{p}\cdot \nabla \varphi_2} \; dx ds+\iint_{Q_{T}}{ \beta \tilde{q} \varphi_2 } \; dx ds
\\
\hspace{2cm}\displaystyle{-\iint_{Q_{T}}{ f \tilde{q} 1_{\Omega_{c} }\varphi_2 } \; dx ds -\iint_{Q_{T}}{\gamma_C(C-C_d) \varphi_2 } \; dx ds\,=0 ,}\\
\end{array}
\end{equation}
for all $\varphi_{1} \in L^{\frac{2p}{p-2}}(0,T;H^{1}(\Omega))$ and $\varphi_{2} \in L^{2}(0,T;H^1(\Omega))$, where $(N,C)$ is a weak solution of the system \eqref{CNS}-\eqref{cl}-\eqref{ci} in the sense of Definition \ref{solution faible}.
\end{definition}
\noindent The main assumptions in this subsection are:

\begin{equation}\label{regul gradient}
\sqrt{a(N)}\nabla N  \in L^{\infty}(Q_{T}).
\end{equation}
\begin{equation}\label{nombre fini}
a(N)\ \mbox{ vanishes  at a finite number of points in }  [0,T]\times\Omega.
\end{equation}
\noindent  The principle result of this subsection is given in the following Theorem:
\begin{theoreme}\label{th exst adj}
Suppose that  $ a'(N)\leqslant \kappa\; a(N)$, where $\kappa$ is a positive constant, and there exists a function $\mu \in \mathcal{C}^{1}\left([0,1] ; \mathbb{R}^{+}\right)$, such that $\mu(u)=\frac{\chi(u)}{a(u)}$ for all $u \in(0,1)$ and $\mu(0)=\mu(1)=0$ and assume that  \eqref{regul gradient} and \eqref{nombre fini} hold. Let (N, C) be a
weak solution of equation \eqref{CNS}-\eqref{ci}. If  the hypotheses of Theorem \ref{th:existence} hold, then there exists a  solution to the modified adjoint system \eqref{adjoint2}-\eqref{adj bord2} 
\end{theoreme}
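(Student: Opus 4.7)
The plan follows the regularization--Galerkin--limit scheme announced in the introduction. First, replace $a(N)$ by $a_\varepsilon(N)=a(N)+\varepsilon$ in the $\tilde p$-equation of \eqref{adjoint2} to obtain a non-degenerate linear coupled parabolic system in $(\tilde p_\varepsilon,\tilde q_\varepsilon)$; all its coefficients are bounded through Theorem~\ref{th:existence} and \eqref{regul gradient}. Then build a Faedo--Galerkin approximation $(\tilde p_\varepsilon^m,\tilde q_\varepsilon^m)$ in the span of the first $m$ Neumann--Laplacian eigenfunctions of $\Omega$. Substituting into the weak form reduces the problem to a linear ODE system in the Fourier coefficients with Carathéodory right-hand side, yielding a local-in-time solution by standard theory.

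To globalize in $s$ and later pass to the limits in $m$ and $\varepsilon$, test the first equation against $\tilde p_\varepsilon^m$ and the second against $\tilde q_\varepsilon^m$, and add. The dangerous terms are the drift $a'(N)\nabla N\cdot\nabla\tilde p_\varepsilon^m$ and the chemotactic-like $\chi'(N)\nabla C\cdot\nabla\tilde p_\varepsilon^m$. The algebraic hypothesis $a'(N)\le\kappa\,a(N)$ together with \eqref{regul gradient} bounds the first by $\kappa\|\sqrt{a(N)}\nabla N\|_\infty\,\sqrt{a_\varepsilon(N)}\,|\nabla\tilde p_\varepsilon^m|$, which Young's inequality absorbs into $\int a_\varepsilon(N)|\nabla\tilde p_\varepsilon^m|^2$. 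For the second, the structural identity $\chi=\mu\,a$ with $\mu\in\mathcal{C}^1$ and $a'\le\kappa a$ gives $|\chi'(N)|\le Ca(N)$, and combined with the parabolic-regularity gain $\nabla C\in L^p(0,T;L^\infty(\Omega))$ the same mechanism applies. The coupling $\chi(N)\nabla\tilde p_\varepsilon^m\cdot\nabla\tilde q_\varepsilon^m$, the sources $\gamma_N(N-N_d)$, $\gamma_C(C-C_d)$, and the control $f\,\tilde q_\varepsilon^m$ are absorbed by Cauchy--Schwarz/Young. Gronwall's lemma then yields, uniformly in $(\varepsilon,m)$, $\tilde p_\varepsilon^m\in L^\infty(0,T;L^2)$, $\sqrt{a_\varepsilon(N)}\nabla\tilde p_\varepsilon^m\in L^2(Q_T)$, $\tilde q_\varepsilon^m\in L^\infty(0,T;L^2)\cap L^2(0,T;H^1)$ together with the matching time-derivative bounds, the exponent $\tfrac{2p}{p+2}$ for $\partial_s\tilde p$ encoding the $L^p/L^{p/(p-1)}$ duality against $\nabla C$. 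Weak/weak-$*$ compactness and Aubin--Lions on $\tilde q_\varepsilon^m$ pass to the limit $m\to\infty$ and produce $(\tilde p_\varepsilon,\tilde q_\varepsilon)$ solving the regularized weak system.

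The passage $\varepsilon\to0$ is the heart of the argument and the \textbf{main obstacle}. Extract a subsequence such that $\sqrt{a_\varepsilon(N)}\nabla\tilde p_\varepsilon\rightharpoonup\xi$ in $L^2(Q_T)$ and $\tilde q_\varepsilon\to\tilde q$ strongly in $L^2(Q_T)$ (by Aubin--Lions). One must identify $\xi=\sqrt{a(N)}\nabla\tilde p$ and pass to the limit in the drift products $a'(N)\nabla N\cdot\nabla\tilde p_\varepsilon$, $\chi'(N)\nabla C\cdot\nabla\tilde p_\varepsilon$ and in $\chi(N)\nabla\tilde p_\varepsilon$. The identification is nontrivial because no direct control of $\nabla\tilde p_\varepsilon$ survives on the degenerate set $\{a(N)=0\}$. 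This is exactly where \eqref{nombre fini} is decisive: the degenerate set is negligible, and on any compact subset of its open complement $a(N)$ is bounded below by a positive constant, which gives a local $L^2$-bound on $\nabla\tilde p_\varepsilon$, identifies the weak limit there, hence a.e.\ in $Q_T$. The drift terms then pass to the limit via the factorizations $a'(N)\nabla N=(\sqrt{a(N)}\nabla N)\cdot(a'(N)/\sqrt{a(N)})$ and $\chi'(N)=\mu'(N)a(N)+\mu(N)a'(N)$, each of which splits the product into a strongly convergent bounded factor (by dominated convergence, using $\sqrt{a_\varepsilon(N)}\to\sqrt{a(N)}$ a.e.\ and $a'\le\kappa a$) times the weakly convergent $\sqrt{a_\varepsilon(N)}\nabla\tilde p_\varepsilon$. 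The term $\chi(N)\nabla\tilde p_\varepsilon$ is handled similarly, writing $\chi(N)\nabla\tilde p_\varepsilon=\bigl(\chi(N)/\sqrt{a(N)}\bigr)\sqrt{a_\varepsilon(N)}\nabla\tilde p_\varepsilon+o(1)$. Everything else (boundary/initial data, linearity of the limit equation) is then routine bookkeeping.
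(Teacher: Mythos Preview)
Your proposal is correct and follows essentially the same strategy as the paper: regularize $a$ by $a_\varepsilon=a+\varepsilon$, solve the non-degenerate linear system via Faedo--Galerkin, derive energy estimates using the structural hypotheses $a'\le\kappa a$ and $\chi=\mu a$ together with \eqref{regul gradient} and the parabolic regularity $\nabla C\in L^p(0,T;L^\infty)$, and identify the weak limit of $\sqrt{a(N)}\nabla\tilde p_\varepsilon$ by restricting to sets where $a(N)$ is bounded below (the paper uses the superlevel sets $Q_{T_\delta}=\{a(N)>\delta\}$ and lets $\delta\to0$, which is exactly your compact-subset idea; hypothesis \eqref{nombre fini} makes the complement negligible). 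The only notable difference is organizational: you obtain bounds uniform in $(\varepsilon,m)$ directly at the Galerkin stage, whereas the paper first establishes $\varepsilon$-dependent Galerkin bounds via the crude estimate $|V\cdot\nabla\tilde p_{n,\varepsilon}\,\tilde p_{n,\varepsilon}|\le\tfrac{1}{2\varepsilon}\|V\|_{L^\infty}\|\tilde p_{n,\varepsilon}\|_{L^2}^2+\tfrac{\varepsilon}{2}\|\nabla\tilde p_{n,\varepsilon}\|_{L^2}^2$ and only afterwards rederives the $\varepsilon$-uniform estimates on $(\tilde p_\varepsilon,\tilde q_\varepsilon)$---your streamlining is harmless and arguably more consistent with the stated hypothesis \eqref{regul gradient}.
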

\noindent The proof of Theorem \ref{th exst adj} will be done with several steps: we first prove the existence of solutions of the non-degenerate problem with parameter $\varepsilon$ (by using the Faedo-Galerkin method), then we pass to the limit when $\varepsilon$ tends to $0$.    \\

\noindent We assume first that $ a'(N)\leqslant \kappa\; a(N)$, where $c$ is a  positive constant  and  that there exists a function $\mu \in \mathcal{C}^{1}\left([0,1] ; \mathbb{R}^{+}\right)$ such that $\mu(u)=\frac{\chi(u)}{a(u)}$ for all $u \in(0,1)$ and $\mu(0)=\mu(1)=0$.

\noindent Let us now introduce the following associated non-degenerate approximation
of system \eqref{adjoint2}-\eqref{adj bord2} for a fixed $\varepsilon$ where $a_{\varepsilon}=a(N)+\varepsilon$:
\begin{equation}\label{adj non dege}
  \begin{cases}
\displaystyle\partial_{s} \tilde{p}_{\varepsilon}-\nabla \cdot\left(a_{\varepsilon}\left(N\right) \nabla  \tilde{p}_{\varepsilon}\right)+{V}\cdot \nabla  \tilde{p}_{\varepsilon}=\alpha \tilde{q}_{\varepsilon}
+\gamma_N \left(N-N_{d}\right) \quad \text { in } Q_{T}, \\[0.5em]
\displaystyle\partial_{s} \tilde{q}_{\varepsilon}-\Delta  \tilde{q}_{\varepsilon}+\nabla \cdot\left(\chi\left(N\right) \nabla  \tilde{p}_{\varepsilon}\right)+\beta  \tilde{q}_{\varepsilon}-f \tilde{q}_{\varepsilon}1_{\Omega} =\gamma_{C}\left(C-C_{d}\right),
\end{cases}  
\end{equation}
with the following boundary and initial conditions:

\begin{equation}
\label{adj non  bord }
\begin{cases}
\displaystyle a_{\varepsilon}(N)\nabla \tilde{p}_{\varepsilon}\cdot {\eta}=(\nabla \tilde{q}_{\varepsilon}-\chi(N)\nabla \tilde{p}_{\varepsilon})\cdot {\eta}=0=0 \text { on } \Sigma_{T}, \\
\displaystyle \tilde{p}_{\varepsilon}(0)=\tilde{q}_{\varepsilon}(0)=0 \text { on } \Omega \, ,
\end{cases}\\
\end{equation}
where $\  {V}= a'(N)\nabla N-\chi'(N)\nabla C$. \\
\noindent We recall that the proof of the existence is based on the assumption $\nabla N \in L^{\infty}(Q_T)$, hence  ${V}\in L^{\infty}(Q_T)$. \\

\noindent The next proposition states the existence of a weak solution of the problem \eqref{adj non dege}-\eqref{adj non bord }.
\begin{proposition}\label{prop adj exis}
Under the assumptions \eqref{regul gradient} and \eqref{nombre fini},  the non-degenerate problem \eqref{adj non dege}-\eqref{adj non bord } admits at least one weak solution $(\tilde{p}_{\varepsilon}, \tilde{q}_{\varepsilon})$, 
such that:\\
\noindent $\forall \varphi_{1}\in L^{\frac{2p}{p-2}}(0,T;H^{1}({\Omega})) \text{ and } \ \varphi_{2} \in L^{2}(0,T;H^{1}({\Omega}))$,  
      $(\tilde{p}_{\varepsilon},\tilde{q}_{\varepsilon}$) satisfy
\begin{equation}\label{phpepsi}
\begin{split}
   &\hspace{-2.5em}\displaystyle{ \int_{0}^T {<\partial_{s}\tilde{p}_{\varepsilon}, \varphi_1>}\; dx ds + \iint_{Q_T}{ a_{\varepsilon}(N)\nabla \tilde{p}_{\varepsilon} \cdot\nabla\varphi_1} \; dx ds+\iint_{Q_{T}}{ a'(N) \nabla N\cdot\nabla \tilde{p}_\varepsilon \varphi_1 } \; dx ds}\\[0.1em]
   &\hspace{1cm} \displaystyle{-\iint_{Q_{T}}{ \chi'(N) \nabla C\cdot \nabla \tilde{p}_{\varepsilon}\ \varphi_1 } \; dx ds- \iint_{Q_{T}}{ \alpha \tilde{q}_{\varepsilon} \varphi_1 } \; dx ds- \iint_{Q_{T}}{ \gamma_N(N-N_d)\varphi_1 } \; dx ds=   0} , 
\end{split}
\end{equation}
\begin{equation}\label{qhqepsi}
\begin{split}
& \hspace{-8em}\quad \int_{0}^T {<\partial_{s}\tilde{q}_{\varepsilon}, \varphi_2>}\; dx ds + \iint_{Q_{T}}{ \nabla \tilde{q}_{\varepsilon} \cdot\nabla\varphi_2} \; dx ds-\iint_{Q_{T}}{\chi (N) \nabla \tilde{p}_{\varepsilon}\cdot \nabla \varphi_2} \; dx ds+\iint_{Q_{T}}{ \beta \tilde{q}_{\varepsilon} \varphi_2 } \; dx ds
\\[0.1em]
&\hspace{-2em}\displaystyle{-\iint_{Q_{T}}{ f \tilde{q}_{\varepsilon} 1_{\Omega_{c} }\varphi_2 } \; dx ds -\iint_{Q_{T}}{\gamma_C(C-C_d) \varphi_2 } \; dx ds\,=0 .}\\
 \end{split}
\end{equation}
\end{proposition}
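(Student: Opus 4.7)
The plan is to apply the Faedo--Galerkin method, which works cleanly here because, for fixed $\varepsilon > 0$ and with $(N,C,f)$ frozen from the direct problem, the approximating system is \emph{linear} in the unknowns $(\tilde{p}_{\varepsilon},\tilde{q}_{\varepsilon})$. Let $\{\phi_j\}_{j\ge 1}$ denote the $L^2(\Omega)$-orthonormal basis of Neumann eigenfunctions of $-\Delta$, so that $\phi_j \in H^{1}(\Omega)$. Set $V_n=\mathrm{span}\{\phi_1,\ldots,\phi_n\}$ and seek
$$
\tilde{p}_{\varepsilon}^{n}(s)=\sum_{i=1}^{n}\alpha_i^{n}(s)\phi_i, \qquad \tilde{q}_{\varepsilon}^{n}(s)=\sum_{i=1}^{n}\beta_i^{n}(s)\phi_i,
$$
required to satisfy \eqref{phpepsi}--\eqref{qhqepsi} against every $\varphi \in V_n$ with zero initial data. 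This is a linear ODE system for $(\alpha^n,\beta^n)\in \mathbb{R}^{2n}$ with coefficients that are measurable and integrable on $(0,T)$, so Carathéodory's theorem yields a unique absolutely continuous solution locally; linearity together with the a priori bounds below extends it to $[0,T]$.

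The core of the argument is the derivation of a priori estimates uniform in $n$ (possibly depending on $\varepsilon$). I would test the first equation with $\tilde{p}_{\varepsilon}^{n}$ and the second with $\tilde{q}_{\varepsilon}^{n}$, multiply the former by a sufficiently large constant $M>0$, and add. The delicate coupling sits in $-\int_\Omega \chi(N)\nabla \tilde{p}_{\varepsilon}^n\cdot \nabla \tilde{q}_{\varepsilon}^n$; invoking the structural hypothesis $\chi=a\mu$, Young's inequality gives
$$
\Big|\int_\Omega \chi(N)\nabla \tilde{p}_{\varepsilon}^n\cdot \nabla \tilde{q}_{\varepsilon}^n\,dx\Big| \le \tfrac{1}{2}\|\nabla \tilde{q}_{\varepsilon}^n\|_{L^2}^2 + \tfrac{\|\mu\|_\infty^2\|a\|_\infty}{2}\int_\Omega a_\varepsilon(N)|\nabla \tilde{p}_{\varepsilon}^n|^2\,dx,
$$
so both pieces are absorbed into the diffusions present on the left-hand side once $M$ is chosen large enough. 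The drift term $\int V\cdot \nabla \tilde{p}_{\varepsilon}^n\,\tilde{p}_{\varepsilon}^n$ is handled using $|a'(N)\nabla N|\le \kappa\|a\|_\infty^{1/2}\|\sqrt{a(N)}\nabla N\|_\infty$ (from \eqref{regul gradient} and $a'\le \kappa a$), which places $a'(N)\nabla N$ in $L^\infty(Q_T)$, while $\chi'(N)\nabla C$ is controlled with the available integrability of $\nabla C$ through H\"older and a further Young splitting. Gronwall's lemma then produces
\begin{align*}
&\|\tilde{p}_{\varepsilon}^{n}\|_{L^\infty(0,T;L^2(\Omega))}+\|\sqrt{a_\varepsilon(N)}\,\nabla \tilde{p}_{\varepsilon}^n\|_{L^2(Q_T)}\le C(\varepsilon),\\
&\|\tilde{q}_{\varepsilon}^{n}\|_{L^\infty(0,T;L^2(\Omega))}+\|\nabla \tilde{q}_{\varepsilon}^n\|_{L^2(Q_T)}\le C,
\end{align*}
together with matching bounds on $\partial_s \tilde{p}_{\varepsilon}^n$ and $\partial_s \tilde{q}_{\varepsilon}^n$ in $L^{\frac{2p}{p+2}}(0,T;(H^1(\Omega))')$ and $L^2(0,T;(H^1(\Omega))')$ respectively, read off directly from the equations.

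Passing to the limit $n\to\infty$ is the easy part because the full system is linear in $(\tilde{p},\tilde{q})$: weak and weak-$\ast$ compactness already suffice to transfer every term to the limit, and no Aubin--Lions strong compactness is needed on $\tilde{p}_{\varepsilon}, \tilde{q}_{\varepsilon}$ themselves (unlike for the direct problem). The zero initial data are inherited because the time regularity embeds the limits in $\mathcal{C}([0,T];(H^1(\Omega))')$. The main obstacle I anticipate is calibrating the Young-inequality weights and the scaling factor $M$ so that the drift and the cross-coupling are absorbed \emph{simultaneously} while leaving a strictly positive share of the two diffusions on the left-hand side, with constants independent of $n$; once this is arranged, the remainder is standard Faedo--Galerkin bookkeeping. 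The bounds obtained here will be weaker than those ultimately needed to send $\varepsilon \to 0$, but that further step belongs to the next part of the paper.
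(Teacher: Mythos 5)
Your proposal is correct and follows essentially the same route as the paper: a Faedo--Galerkin scheme in the spectral (Neumann eigenfunction) basis, local solvability of the resulting ODE system via Carath\'eodory, energy estimates obtained by testing each equation with the approximate solution itself and combining the two inequalities with a weighting constant so that the cross term $\chi(N)\nabla\tilde{p}^n_{\varepsilon}\cdot\nabla\tilde{q}^n_{\varepsilon}$ and the drift are absorbed into the diffusions (the paper scales the $\tilde{q}$-inequality by a small $\delta$ with $\varepsilon-\delta\|\chi(N)\|_\infty^2>0$, which is the same maneuver as your large $M$ on the $\tilde{p}$-inequality), followed by dual bounds on the time derivatives and a passage to the limit by weak compactness alone, which suffices by linearity. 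The only cosmetic difference is that you invoke $\chi=a\mu$ already at the Galerkin level, whereas the paper reserves that structure for the later $\varepsilon\to 0$ estimates and here simply bounds $\chi(N)$ in $L^\infty$.
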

\noindent
\begin{proof}
To prove the existence of at least one solution of System \eqref{phpepsi}-\eqref{qhqepsi}, we will use the Faedo-Galerkin method, deriving a priori estimates and passing to the limit by using compactness arguments.


\noindent To use the Faedo-Galerkin approximation method, we consider an appropriate spectral problem introduced in \cite{sowndarrajan2018optimal} and \cite{bendahmane2005analysis}.  The eigenfunctions $e_l(x)$, identified by the introduced spectral problem, form an orthogonal basis in $H^1(\Omega)$ and an orthonormal basis in $L^2(\Omega)$.

\noindent Our aim is to identify the finite-dimensional approximation solutions for system \eqref{adj non dege}-\eqref{adj non  bord } as sequences $\left\{\tilde{p}_{n, \varepsilon}\right\}_{n>1}$ and $\left\{\tilde{q}_{n, \varepsilon}\right\}_{n>1}$ defined for $s \geq 0$ and $x \in \bar{\Omega}$ by
$$ 
\tilde{p}_{n,\varepsilon}(s,x)=\sum_{l=1}^n c_{ n, l}(s) e_l(x),\quad 
\tilde{q}_{n,\varepsilon}(s,x)=\sum_{l=1}^n d_{ n, l}(s) e_l(x),
$$ 
with the initial conditions
$$
\tilde{p}_{n,\varepsilon}(0,x)=\tilde{q}_{n,\varepsilon}(0,x)=0 .
$$

\noindent Further, it should be noted that the above form of solutions satisfies the required boundary conditions. \\
\noindent Next, we have to determine the  coefficients $\left\{c_{ n, l}(s)\right\}_{l=1}^n$ and $\left\{d_{ n, l}(s)\right\}_{l=1}^n$ such that for $l=1, \ldots, n,$

\begin{equation}\label{phpn}
\begin{split}
   &\hspace{-2.5em}\displaystyle{\int_{\Omega} {\partial_{s}\tilde{p}_{n,\varepsilon}\ e_l}\; dx +  \int_{\Omega}{ a_{\varepsilon}(N)\nabla \tilde{p}_{n,\varepsilon}\cdot \nabla e_l} \; dx + \int_{\Omega}{ {V}\cdot \nabla \tilde{p}_{n,\varepsilon} e_l } \; dx -  \int_{\Omega}{ \alpha\ \tilde{q}_{n,\varepsilon} e_l} \; dx }\\[0.1em]
   &\displaystyle{-\int_{\Omega}{ \gamma_N(N-N_d)e_l } \; dx =   0} , 
\end{split}
\end{equation}
\begin{equation}\label{qhqn}
\begin{split}
& \hspace{-6em}\quad \int_{\Omega} {\partial_{s}\tilde{q}_{n,\varepsilon}\; e_l}\; dx  +  \int_{\Omega}{ \nabla \tilde{q}_{n,\varepsilon} \cdot\nabla e_l} \; dx - \int_{\Omega}{\chi (N) \nabla \tilde{p}_{n,\varepsilon}\cdot \nabla e_l} \; dx +\int_{\Omega}{ \beta\ \tilde{q}_{n,\varepsilon} e_l } \; dx 
\\[0.1em]
&\hspace{-2em}\displaystyle{- \int_{\Omega}{ f \tilde{q}_{n,\varepsilon} 1_{\Omega_{c} }e_l } \; dx  -\int_{\Omega}{\gamma_C(C-C_d) e_l } \; dx \,=0 .}\\
 \end{split}
\end{equation}

\noindent More explicitly, by using the orthonormality of the basis, we can write \eqref{phpn} and \eqref{qhqn} as a system of  ordinary differential equations (l=1,...,n)
\begin{align}
c'_{ n, l}(s)=&-\displaystyle \int_{\Omega}{ a_{\varepsilon}(N)\nabla \tilde{p}_{n,\varepsilon} \cdot\nabla e_l} \; dx -\int_{\Omega}{ {V}\cdot \nabla \tilde{p}_{n,\varepsilon} e_l } \; dx +\int_{\Omega}{ \alpha\ \tilde{q}_{n,\varepsilon} e_l} \; dx + \int_{\Omega}{ \gamma_N(N-N_d)e_l } \; dx \notag\\
     := & F_{1,l}\left(s, c_{n, 1}(s), \ldots, c_{n, n}(s)\right), \label{phpn3}\\ 
\hspace{2em}d'_{ n, l}(s)=&- \displaystyle\int_{\Omega}{\nabla \tilde{q}_{n,\varepsilon}\cdot \nabla e_l} \; dx + \int_{\Omega}{ \chi (N) \nabla \tilde{p}_{n,\varepsilon} \cdot\nabla e_l} \; dx \,- \int_{\Omega}{ \beta\ \tilde{q}_{n,\varepsilon}\ e_l } \; dx \,+ \int_{\Omega}{ f \tilde{q}_{n,\varepsilon} e_l 1_{\Omega} } \; dx\, \notag\\
 &+\int_{\Omega}{\gamma_C(C-C_d) e_l } \; dx \, \notag
\\:= & F_{2,l}\left(s, c_{n, 1}(s), \ldots, c_{n, n}(s)\right).\label{qhqn3}  
\end{align}
 Observe that $F_{i,l}\left(s, c_{n, 1}(s), \ldots, c_{n, n}(s)\right)$, $i=1,2$, are Carath\'eodory functions. Therefore, using the standard ODE theory, there exists absolutely continuous functions $\left\{c_{n, l}\right\}_{l=1}^{n}$, $\left\{d_{n, l}\right\}_{l=1}^{n}$ satisfying \eqref{phpn3} and \eqref{qhqn3} respectively for a.e. $s\in\left[0, s^{\prime}\right)$ for some $s^{\prime}>0$. This proves that the sequences $\left(\tilde{p}_n^{\varepsilon}, \tilde{q}_n^{\varepsilon}\right)$ are well defined and the approximate solution of Equations \eqref{phpn3} and \eqref{qhqn3} exists on $\left(0, s^{\prime}\right)$.

\noindent Let $\displaystyle\phi_{i, n}(s,x)=\sum_{l=1}^n b_{i, n, l}(s) e_l(x), i=1, 2$, where the coefficients $\left\{b_{i, n, l}\right\}, i=1, 2$, are absolutely continuous functions. Then the approximate solution satisfies the weak formulation:

\begin{align}
&\displaystyle{\int_{\Omega} {\partial_{s}\tilde{p}_{n,\varepsilon} \phi_{1,n}}\; dx +  \int_{\Omega}{ a_{\varepsilon}(N)\nabla \tilde{p}_{n,\varepsilon}\cdot \nabla \phi_{1,n}} \; dx + \int_{\Omega}{ {V}\cdot \nabla \tilde{p}_{n,\varepsilon} \phi_{1,n} } \; dx -  \displaystyle\int_{\Omega}{ \alpha\ \tilde{q}_{n,\varepsilon} \phi_{1,n}} \; dx } \notag \\
& \hspace{6cm}\displaystyle{-\int_{\Omega}{ \gamma_N(N-N_d)\phi_{1,n} } \; dx =   0} ,\label{phpfin} \\
& \int_{\Omega} {\partial_{s}\tilde{q}_{n,\varepsilon}\; \phi_{2,n}}\; dx  +  \int_{\Omega}{ \nabla \tilde{q}_{n,\varepsilon} \cdot\nabla \phi_{2,n}} \; dx - \int_{\Omega}{\chi (N) \nabla\tilde{p}_{n,\varepsilon} \cdot\nabla \phi_{2,n}} \; dx + \int_{\Omega}{ \beta \tilde{q}_{n,\varepsilon} \phi_{2,n} } \; dx \notag \\
&\hspace{6cm} - \displaystyle \int_{\Omega}{ f \tilde{q}_{n,\varepsilon} 1_{\Omega_{c} }\phi_{2,n} } \; dx - \int_{\Omega}{\gamma_C(C-C_d) \phi_{2,n} } \; dx \,=0\; . \label{qhqfin}
\end{align}

\noindent Taking respectively  $\phi_{2,n}=\tilde{q}_{n,\varepsilon}$ and  $\phi_{1,n}=\tilde{p}_{n,\varepsilon}$  in \eqref{phpfin} and \eqref{qhqfin}, we have
\begin{equation}\label{est q epsi}
\begin{split}
    \frac{1}{2} \frac{d}{d s}\left\|\tilde{q}_{n,\varepsilon}\right\|_{L^2(\Omega)}^2+\frac{1}{2}\left\|\nabla \tilde{q}_{n,\varepsilon}\right\|_{L^2(\Omega)}^2 \leqslant &\frac{1}{2}\|\chi(N)\|_{L^{\infty}(\Omega)}^2\left\|\nabla \tilde{p}_{n,\varepsilon}\right\|_{L^2(\Omega)}^2+(\beta+\frac{1}{2}+\|f\|_{L^{\infty}(\Omega)})\left\|\tilde{q}_{n,\varepsilon}\right\|_{L^2(\Omega)}^2 \\
    &+\frac{\gamma_C^2}{2 }\ \norm{C-C_d}_{L^2(\Omega)}^2 \, 
\end{split}
\end{equation}
 and 
\begin{equation}\label{p eps est}
    \begin{split}
       &\frac{1}{2} \frac{d}{ds }\left\|\tilde{p}_{n,\varepsilon}\right\|_{L^2(\Omega)}^2+\left\|\sqrt{a(N)} \nabla \tilde{p}_{n,\varepsilon}\right\|_{L^2(\Omega)}^2+\varepsilon\left\|\nabla \tilde{p}_{n,\varepsilon}\right\|_{L^2(\Omega)}^2  \\
       &\quad\leqslant-\int_{\Omega} {V} \cdot \nabla \tilde{p}_{n,\varepsilon} \tilde{p}_{n,\varepsilon}\; dx+\frac{\alpha^2}{2}\left\|\tilde{q}_{n,\varepsilon}\right\|_{L^2(\Omega)}^2+\frac{\gamma_N^2}{2}\|N-N_d\|_{L^2(\Omega)}^2  +\left\|\tilde{p}_{n,\varepsilon}\right\|_{L^2(\Omega)}^2  \\
       &\quad\leqslant \frac{1}{2 \varepsilon}\left\|{V}\right\|_{L^{\infty}(Q_T)}\left\|\tilde{p}_{n,\varepsilon}\right\|_{L^2(\Omega)}^2+\frac{\varepsilon}{2}\left\|\nabla \tilde{p}_{n,\varepsilon}\right\|_{L^2(\Omega)}^2
       \\
       &\qquad+\frac{\alpha^2}{2}\left\|\tilde{q}_{n,\varepsilon}\right\|_{L^2(\Omega)}^2+\frac{\gamma_ N^2}{2}\|N-N\|_{L^2(\Omega)}^2+\left\|\tilde{p}_{n,\varepsilon}\right\|_{L^2(\Omega)}^2.  
    \end{split}
\end{equation}

\noindent Let's multiply now  \eqref{est q epsi}  by $\delta$ and then sum \eqref{est q epsi} and \eqref{p eps est}. This yields
$$
\begin{aligned}
& \frac{1}{2} \frac{d}{d s}\left(\left\|\tilde{p}_{n,\varepsilon}\right\|_{L^{2}(\Omega)}^{2}+\delta\left\|\tilde{q}_{n,\varepsilon}\right\|_{L^{2}(\Omega)}^{2}\right)+\frac{1}{2} \delta\left\|\nabla \tilde{q}_{n,\varepsilon}\right\|_{L^{2}(\Omega)}^{2} +\left\|\sqrt{a(N)} \nabla \tilde{p}_{n,\varepsilon}\right\|_{L^{2}(\Omega)}^{2}\\
&+\frac{1}{2}\left(\varepsilon-\delta\|\chi(N)\|_{L^{\infty}(\Omega)}^{2}\right)\left\|\nabla \tilde{p}_{n,\varepsilon}\right\|_{L^{2}(\Omega)}^{2}   \leq \frac{1}{2\varepsilon}\left\|{V}\right\|_{L^{\infty}(Q_T)}\left\|\tilde{p}_{n,\varepsilon}\right\|_{L^{2}(\Omega)}^{2}+\frac{\alpha^{2}}{2}\left\|\tilde{q}_{n,\varepsilon}\right\|_{L^{2}(\Omega)}^{2}  +\frac{\gamma_{N}^2}{2}\|N-N_d\|_{L^{2}(\Omega)}^{2}\\
&\qquad\qquad\hspace{2cm}\qquad\qquad\qquad+\left\|\tilde{p}_{n,\varepsilon}\right\|_{L^{2}(\Omega)}^{2}+\delta(\beta+\frac{1}{2}+\|f\|_{L^{\infty}(\Omega)})\left\|\tilde{q}_{n,\varepsilon}\right\|_{L^{2}(\Omega)}^{2} +\delta\frac{\gamma_{C}^{2}}{2 }\|C-C_d\|_{L^{2}(\Omega)}^{2}.
\end{aligned}
$$

\noindent For a fixed $\varepsilon$, we choose a small $\delta > 0$ such that $\left(\varepsilon-\delta\|\chi(N)\|_{L^{\infty}(\Omega)}^{2}\right)$ is positive. Then integrating over $(0,s)$, using the assumption \eqref{regul gradient} and  Gronwall inequalities, we can deduce that

$$
\displaystyle \big(\tilde{p}_{n,\varepsilon}, \tilde{q}_{n,\varepsilon}\big) \in \big(L^{\infty}\left(0, s ; L^{2}(\Omega)\right) \displaystyle\cap L^{2}\left(0, s ; H^{1}\left(\Omega\right)\right)\big)^{2} \, .$$
Moreover, one can show that
$$
\displaystyle\norm{\left(\partial_s \tilde{p}_{n,\varepsilon}, \partial_s \tilde{q}_{n,\varepsilon}\right)}_{{L^{\frac{2p}{p+2}}\left(0, s, (H^{1}(\Omega))^{\prime}\right)}\times{L^2\left(0, s, (H^{1}(\Omega))^{\prime}\right)}} \leq A,
$$
where the constant $A$ is independent of $n$. 
Hence, the local solution constructed above can be extended to the whole time interval $[0, T]$  as in \cite{bendahmane2009convergence}, so we omit the details.

\noindent Therefore as $n \rightarrow +\infty$, we have

\begin{equation}
\begin{array}{rccl}
\left(\tilde{p}_{n,\varepsilon}, \tilde{q}_{n,\varepsilon}\right) &\rightharpoonup&\left(\tilde{p}_{\varepsilon}, \tilde{q}_{\varepsilon}\right)  & \text{ weakly* in } L^{\infty}\left(0, T ; L^2(\Omega)\right), \\
\left(\tilde{p}_{n,\varepsilon}, \tilde{q}_{n,\varepsilon}\right)  &\rightharpoonup&\left(\tilde{p}_{\varepsilon}, q_{\varepsilon}\right) & \text{ weakly in } L^2\left(0, T ; H^1(\Omega)\right), \\
\left(\nabla \tilde{p}_{n,\varepsilon}, \nabla \tilde{q}_{n,\varepsilon}\right)  &\rightharpoonup&\left(\nabla \tilde{p}_{\varepsilon}, \nabla \tilde{q}_{\varepsilon}\right) & \text{ weakly in } L^2\left(Q_T\right), \\
\left(\partial_s \tilde{p}_{n,\varepsilon}, \partial_s \tilde{q}_{n,\varepsilon}\right)  &\rightharpoonup&\left(\partial_s \tilde{p}_{\varepsilon}, \partial_s \tilde{q}_{\varepsilon}\right) & \text{ weakly in } {{L^{\frac{2p}{p+2}}\left(0, s, (H^{1}(\Omega))^{\prime}\right)}\times{L^2\left(0, s, (H^{1}(\Omega))^{\prime}\right)}}  .
\end{array}
\end{equation}

\noindent One can easily now deduce the weak convergence of $\tilde{p}_{n,\varepsilon} $  and $\tilde{q}_{n,\varepsilon}$ to $\tilde{p}_{\varepsilon}$  and  $\tilde{q}_{\varepsilon}$ respectively, satisfying the definition of a weak solution of the non-degenerate system \eqref{adj non dege}-\eqref{adj non  bord }. 
This achieves the proof of the proposition \ref{prop adj exis}.
\end{proof}

\noindent To show the existence of a weak solution to Problem \eqref{adjoint2}-\eqref{adj bord2} in the sense of definition \ref{def adj}, we still have to pass to the limit in System \eqref{phpepsi}-\eqref{qhqepsi} when $\varepsilon$ tends to $0$. 

\noindent First, we shall need to prove the following uniform a priori estimates:\\
we replace  $\chi(N)$ by $\mu(N)a(N)$, use $a'(N) \le \kappa \ a(N)$, choose  $\tilde{q}_\varepsilon$ as a test function in \eqref{qhqepsi} and use Young's inequalities to obtain:
\begin{equation*}
    \begin{array}{ll}
    \medskip       &\hspace{-1cm}\displaystyle\frac{1}{2}\int_{0}^T  \frac{d}{d s}\left\|\tilde{q}_{\varepsilon}(s)\right\|_{L^{2}(\Omega)}^{2}\; ds+
       \iint_{Q_{T}}
      \abs{\nabla \tilde{q}_{\varepsilon}(s)}^{2}\; dxds \\
      \medskip
      &\hspace{-1cm}\displaystyle\leqslant  \iint_{Q_{T}} \left(a(N)^{\frac{1}{2}} \nabla \tilde{p}_{\varepsilon}\right)\left(a(N)^{\frac{1}{2}}\mu(N)\right) \cdot \nabla \tilde{q}_{\varepsilon}\; dxds+\left(\beta+\frac{1}{2}+\|f\|_{L^{\infty}(Q_{T})}\right)\int_{0
}^{T}\left\|\tilde{q}_{\varepsilon}\right\|_{L^{2}(\Omega)}^{2}\;ds \\
\medskip
&\hspace{-1cm}\quad \displaystyle +\int_{0}^{T}\frac{\gamma^{2}_C}{2 }\|C(s)-C_d(s)\|_{L^{2}(\Omega)}^{2}\; ds
\\
\medskip
 
&\hspace{-1cm} \displaystyle\leqslant \frac{1}{2}  \norm{a(N)}_{L^{\infty}(Q_{T})}\norm{\mu(N)}_{L^{\infty}(Q_{T})}^2
\int_{0}^{T}\left\|\sqrt{a(N)} \nabla \tilde{p}_{\varepsilon}\right\|_{L^{2}(\Omega)}^{2}\; ds
 +\frac{1}{2}\int_{0}^{T}
        \left\|\nabla \tilde{q}_{\varepsilon}\right\|_{L^{2}(\Omega)}^{2}\;ds
\\[1em]
&\hspace{-1cm}\quad
\displaystyle+\left(\beta+\frac{1}{2}+\|f\|_{L^{\infty}(Q_{T})}\right)\int_{0
}^{T}\left\|\tilde{q}_{\varepsilon}\right\|_{L^{2}(\Omega)}^{2}\;ds  \displaystyle +\int_{0}^{T}\frac{\gamma^{2}_C}{2 }\|C(s)-C_d(s)\|_{L^{2}(\Omega)}^{2}\; ds.

\end{array}
\end{equation*}

\noindent Then, we have

\begin{equation}\label{est q eps}
\begin{array}{ll}
\medskip
  &\hspace{-1cm}\displaystyle\frac{1}{2}\int_{0}^T  \frac{d}{d s}\left\|\tilde{q}_{\varepsilon}(s)\right\|_{L^{2}(\Omega)}^{2}\; ds+
       \frac{1}{2}\iint_{Q_{T}}
      \abs{\nabla \tilde{q}_{\varepsilon}(s)}^{2}\; dxds \\
      \medskip
      &\hspace{-1cm}\displaystyle\leqslant \frac{1}{2}  \norm{a(N)}_{L^{\infty}(Q_{T})}\norm{\mu(N)}_{L^{\infty}(Q_{T})}^2
 \int_{0}^{T}\left\|\sqrt{a(N)} \nabla \tilde{p}_{\varepsilon}\right\|_{L^{2}(\Omega)}^{2}\; ds
\\[1em]
&\hspace{-1cm}\quad
\displaystyle+\left(\beta+\frac{1}{2}+\|f\|_{L^{\infty}(Q_{T})}\right)\int_{0
}^{T}\left\|\tilde{q}_{\varepsilon}\right\|_{L^{2}(\Omega)}^{2}\;ds  \displaystyle +\int_{0}^{T}\frac{\gamma^{2}_C}{2 }\|C(s)-C_d(s)\|_{L^{2}(\Omega)}^{2}\; ds.
\end{array}  
\end{equation}

\noindent Now, choosing $\tilde{p}_\varepsilon$ as a test function in \eqref{phpepsi}, we get:

\begin{equation}\label{pteps}
\begin{split}
    \int_{0}^{T}\left(\frac{1}{2} \frac{d}{d s}\left\|\tilde{p}_{\varepsilon}\right\|_{L^{2}(\Omega)}^{2}+\left\|\sqrt{a(N)} \nabla \tilde{p}_{\varepsilon}\right\|_{L^{2}(\Omega)}^{2}+\varepsilon\left\|\nabla \tilde{p}_{\varepsilon}\right\|_{L^{2}(\Omega)}^{2}\right)\;ds 
    &\leqslant-  \iint_{Q_{T}} {V} \cdot \nabla \tilde{p}_{\varepsilon} \tilde{p}_{\varepsilon}\; dxds
    \\
&\hspace{-3cm}+\int_{0}^{ T}\left(\frac{\alpha^{2}}{2}\left\|\tilde{q}_{\varepsilon}\right\|_{L^{2}(\Omega)}^{2} +\left\|\tilde{p}_{\varepsilon}\right\|_{L^{2}(\Omega)}^{2} +\frac{\gamma_{N}^{2}}{2}\|N-N_d\|_{L^{2}(\Omega)}^{2}\right)\;ds.
\end{split}
\end{equation}

\noindent As ${V}=a'(N)\nabla N -\chi^{\prime}(N) \nabla C$ and $a'(N)\leqslant \kappa \; a(N)$, one has


\begin{equation}\label{vecteur}
\begin{split}
  &\left| \iint_{Q_T} {V} \cdot \nabla \tilde{p}_{\varepsilon} \tilde{p}_{\varepsilon}\;dxds\right| \\&\leqslant\left| \iint_{Q_T} \kappa\; a(N)\nabla N \cdot  \nabla \tilde{p}_{\varepsilon} \tilde{p}_{\varepsilon}\;dxds\right|+\left|\iint_{Q_T} \chi^{\prime}(N) \ \nabla C \cdot \nabla \tilde{p}_{\varepsilon} \tilde{p}_{\varepsilon}\;dxds\right| \\
  &\leqslant  \left|\iint_{Q_T}\kappa(\sqrt{a(N)} \nabla \tilde{p}_{\varepsilon}) \cdot \left( (\sqrt{a(N)} \nabla N) \tilde{p}_{\varepsilon}\right)\;dxds\right|+\left|\iint_{Q_T} {(\mu^{\prime}(N) \sqrt{a(N)} )\nabla C \cdot (\sqrt{a(N)}\nabla \tilde{p}_{\varepsilon} )\tilde{p}_{\varepsilon}}\; dxds\right|\\
  &+\left| \iint_{Q_T} {\kappa (\sqrt{a(N)}\mu(N))
 \nabla C \cdot(\sqrt {a(N)} \nabla \tilde{p}_{\varepsilon}) \tilde{p}_{\varepsilon}}\; dxds\right|.
 \end{split}
 \end{equation}
Using Young's inequality, we get 
 \begin{equation}
 \begin{split}
   &\left| \iint_{Q_T} {V} \cdot \nabla \tilde{p}_{\varepsilon} \tilde{p}_{\varepsilon}\;dxds\right|\leqslant  \frac{1}{8}\int_{0}^{T}\left\|\sqrt{a(N)} \nabla \tilde{p}_{\varepsilon}\right\|_{L^{2}(\Omega)}^{2}\;ds + 2 \kappa^{2} \norm{ \sqrt{a(N)}\nabla N}_{L^{\infty}(Q_{T})}^{2}\int_{0}^{T}\norm{\tilde{p}_\varepsilon}_{L^{2}(\Omega)}^2\;ds  \\
 & +\frac{1}{8}\int_{0}^{T}\left\|\sqrt{a(N)} \nabla \tilde{p}_{\varepsilon}\right\|_{L^{2}(\Omega)}^{2}\;ds+4\left\|\mu^{\prime}(N)\right\|_{L^{\infty}(Q_{T})}^2\|a(N)\|_{L^{\infty}(Q_{T})}\int_{0}^{T}\|\nabla C\|_{L^{\infty}(\Omega)}\left\|\tilde{p}_{\varepsilon}\right\|_{L^{2}(\Omega)}^2\;ds\\
 & +\left\|\mu(N)\right\|_{L^{\infty}(Q_T)}^2\|a(N)\|_{L^{\infty}(Q_T)}\int_{0}^{T}4\kappa^2\|\nabla C\|_{L^{\infty}(\Omega)}^2\left\|\tilde{p}_{\varepsilon}\right\|_{L^{2}(\Omega)}^2\;ds. 
 \end{split}
 \end{equation}

\noindent Now, we recall from \cite{ladyzhenskaia1968linear} the regularity properties of the parabolic equation governing the concentration:

\begin{equation}\label{Cregularity}
\nabla C \in L^{p}(0,T; L^{\infty}(\Omega)), \quad p \geq 2.
\end{equation}

\noindent Additionally, using the assumption  \eqref{regul gradient} on the cell density, we get    
\\
\begin{equation}
    \begin{split}
&\left| \iint_{Q_T} {V} \cdot \nabla \tilde{p}_{\varepsilon} \tilde{p}_{\varepsilon}\;dxds\right|\leqslant  \frac{1}{4}\int_{0}^{T}\left\|\sqrt{a(N)} \nabla \tilde{p}_{\varepsilon}\right\|_{L^{2}(\Omega)}^{2}\;ds+\int_{0}^{T} K\left\|\tilde{p}_{\varepsilon}\right\|_{L^{2}(\Omega)}^{2}\;ds, 
\end{split}    
\end{equation}
where $K=2 \kappa^{2}  \norm{  \sqrt{a(N)}\nabla N }_{L^{\infty}(Q_{T})}^{2}+(4\left\|\mu^{\prime}(N)\right\|_{L^{\infty}(Q_{T})}^2+4\kappa^2\left\|\mu(N)\right\|_{L^{\infty}(Q_{T})}^2) \norm{a(N)}_{L^{\infty}(Q_{T})}\norm{\nabla C}_{L^{\infty}(\Omega)}^{2}$ is 
 independent of $\varepsilon$. 
 \\
 
 \noindent Consequently, inequality \eqref{pteps} leads to
\begin{multline}
    \label{pesp estm}
       \displaystyle\int_{0}^{T}{ \left(\frac{1}{2} \frac{d}{d s}\left\|\tilde{p}_{\varepsilon}\right\|_{L^{2}(\Omega)}^{2}+\frac{3}{4}\left\|\sqrt{a(N)} \nabla \tilde{p}_{\varepsilon}\right\|_{L^{2}(\Omega)}^{2}+\varepsilon\left\|\nabla \tilde{p}_{\varepsilon}\right\|_{L^{2}(\Omega)}^{2}\right)}\; ds  \leqslant \\
       \displaystyle\int_{0}^{T}{\frac{\alpha^{2}}{2}\left\|\tilde{q}_{\varepsilon}\right\|_{L^{2}(\Omega)}^{2}+\frac{\gamma_{N}^{2}}{2}\|N-N d\|_{L^{2}(\Omega)}^{2}}\;ds
       +\displaystyle\int_{0}^{T}(K+1)\left\|\tilde{p}_{\varepsilon}\right\|_{L^{2}(\Omega)}^{2}.
\end{multline}

\noindent Finally, we multiply \eqref{est q eps} by $\delta >0$ and add it to  \eqref{pesp estm} to obtain: 
$$
\begin{aligned}
&\displaystyle\int_{0}^{T}\frac{1}{2} \frac{d}{d s}\left(\delta\left\|\tilde{q}_{\varepsilon}\right\|_{L^{2}(\Omega)}^{2}+\left\|\tilde{p}_{\varepsilon}\right\|_{L^{2}(\Omega)}^{2}\right)+\frac{1}{2}\int_{0}^T\delta\left\|\nabla \tilde{q}_{\varepsilon}\right\|_{L^{2}(\Omega)}^{2}\; ds  \\
&\qquad+\int_{0}^T (\frac{3}{4}-\frac{\delta}{2} \norm{a(N)}_{L^{\infty}(Q_{T})}\norm{\mu(N)}_{L^{\infty}(Q_T)}^2)\left\|\sqrt{a(N)} \nabla \tilde{p}_{\varepsilon}\right\|_{L^{2}(\Omega)}^{2}\; ds
\displaystyle+\int_{0}^T \varepsilon\left\|\nabla \tilde{p}_{\varepsilon}\right\|_{L^{2}(\Omega)}^{2}\;ds  \\
&\leqslant \int_{0}^T{\left(\delta(\beta+\frac{1}{2}+\|f\|_{L^{\infty}(Q_T)})\left\|\tilde{q}_{\varepsilon}\right\|_{L^{2}(\Omega)}^{2}+\delta\frac{\gamma_{C}^{2}}{2}\|C-C_d\|_{L^{2}(\Omega)}^{2}+\frac{\alpha^2}{2}\left\|\tilde{q}_{\varepsilon}\right\|_{L^{2}(\Omega)}^{2}+\frac{\gamma_N^2}{2}\|N-N_d\|_{L^{2}(\Omega)}^{2}\right)}\;ds \\
&\qquad\displaystyle+\int_{0}^T{(K+1)\left\|\tilde{q}_{\varepsilon}\right\|_{L^{2}(\Omega)}^{2}}\;ds.
\end{aligned}
$$
We choose $\delta > 0$ such that $(\frac{3}{4}-\frac{\delta}{2} \norm{a(N)}_{L^{\infty}(Q_T)}\norm{\mu(N)}_{L^{\infty}(Q_T)}^2)$ is positive to ensure the positivity of the third term in the left-hand side in the above equation. Therefore, by Gronwall's Lemma, we deduce that $\tilde{p}_{\varepsilon}$ and $\tilde{q}_{\varepsilon}$ satisfy:
\begin{equation}\label{cv final p eps}
  \tilde{p}_{\varepsilon} \in L^{\infty}\left(0, T ; L^{2}(\Omega)\right),
\end{equation}
\begin{equation}\label{cv final  qeps}
\tilde{q}_{\varepsilon} \in L^{2}(0, T ; H^1(\Omega)),
\end{equation}
\begin{equation}\label{radical epsi]}
    \sqrt{\varepsilon} \nabla \tilde{p}_{\varepsilon} \in L^{2}\left(0, T ; L^{2}(\Omega)\right), 
\end{equation}  
\begin{equation}\label{radical}
   \sqrt{a(N)} \nabla \tilde{p}_{\varepsilon} \in L^{2}\left(0, T ; L^{2}(\Omega)\right),
\end{equation}
\noindent and are bounded in these spaces independently of $\varepsilon$.\\
\noindent Thus, there exist a solution $ (\tilde{p},\tilde{q})$ and a subsequence of $(\tilde{p}_\varepsilon,\tilde{q}_{\varepsilon})$ still denotes as the sequence such that, as $\varepsilon$ goes to $0$,
\begin{align}
\tilde{p}_{\varepsilon} &\rightharpoonup \tilde{p} \mbox{ weakly* in } L^{\infty}\left(0, T ; L^{2}(\Omega)\right)  \, ,\label{cv p esp}\\
\tilde{q}_{\varepsilon} &\rightharpoonup \tilde{q} \mbox{ weakly in } L^{2}(0,T; H^{1}(\Omega)) \, ,\label{cv q eps}\\
\varepsilon \nabla \tilde{p}_{\varepsilon}=\sqrt{\varepsilon}(\sqrt{\varepsilon}\nabla \tilde{p}_{\varepsilon}) &\rightharpoonup 0 \mbox{ weakly in } L^{2}(Q_T), \, \label{cv espi eps}
\\
\sqrt{a(N)} \nabla \tilde{p}_{\varepsilon} &\rightharpoonup \xi \mbox{ weakly in } L^{2}(Q_T). \, \label{cv mhm}
\end{align}

\noindent Next, let $\phi \in  L^2(0,T,H^{1}(\Omega)$), we have 
$$
\begin{aligned}
 \left|\left\langle\partial_{s} \tilde{q}_{\varepsilon}, \phi\right\rangle \right| &\leqslant\left\|\nabla \tilde{q}_{\varepsilon}\right\|_{L^{2}(\Omega)}\norm{\nabla \phi}_{L^{2}(\Omega)} 
 +\norm{\sqrt{a(N)}}_{L^{\infty}(\Omega)}\norm{\mu(N)}_{L^{\infty}(\Omega)}\left\|\sqrt{a(N)} \nabla \tilde{p}_{\varepsilon}\right\|_{L^{2}(\Omega)}\|\nabla \phi\|_{L^{2}(\Omega)} \\
 &\quad+\beta\left\|\tilde{q}_{\varepsilon}\right\|_{L^{2}(\Omega)}\|\phi\|_{L^{2}(\Omega)}+\|f\|_{L^{\infty}(\Omega)}\left\|\tilde{q}_{\varepsilon}\right\|_{L^{2}(\Omega)}\|\phi\|_{L^{2}(\Omega)} 
 +\gamma_{C}\|C-C_d\|_{L^{2}(\Omega)}\|\phi\|_{L^{2}(\Omega)}.
\end{aligned}
$$
\noindent Now, using \eqref{cv final  qeps}, \eqref{radical} and Theorem \ref{th:existence}, we can deduce that
\begin{equation}
  \left\|\partial_{s} \tilde{q}_{\varepsilon}\right\|_{L^{2}(0,T;(H^{1}(\Omega))')} \leqslant A
\end{equation}
for some $A \geqslant 0$ independent of $\varepsilon$.

\noindent Therefore, up to subsequence, still denoted as $(\tilde{q}_{\varepsilon})_\varepsilon$,  we obtain
\begin{equation}\label{drive en tem q eps}
\frac{\partial \tilde{q}_{\varepsilon}}{\partial s} \rightharpoonup \frac{\partial \tilde{q}}{\partial s} \text { in } {L^{2}(0,T;(H^{1}(\Omega))')}.   
\end{equation}

\noindent On the other hand and for $\phi\in L^q(0,T,H^1(\Omega))$ with $q=\frac{2p}{p-2}$; $p>2$, Cauchy- Schwarz's inequality implies that
\begin{equation*}
\begin{split}
 \left|{ \left\langle\partial_s \tilde{p}_{\varepsilon}, \phi\right\rangle\; } \right|& \leqslant\norm{\sqrt{a(N)}}_{L^{\infty}(\Omega)}\left\|\sqrt{a(N)} \nabla \tilde{p}_{\varepsilon}\right\|_{L^{2}(\Omega)}\|\nabla\phi\|_{L^{2}(\Omega)}  +\left\|\sqrt{\varepsilon} \nabla \tilde{p}_{\varepsilon}\right\|_{L^{2}(\Omega)}\|\nabla\phi\|_{L^{2}(\Omega)}\\
&\quad+\kappa{\norm{\sqrt{a(N)}\nabla N}}_{L^{\infty}(\Omega)}\left\|\sqrt{a(N)} \nabla \tilde{p}_{\varepsilon}\right\|_{L^{2}(\Omega)}\|\phi\|_{L^{2}(\Omega)} \\
&\quad+(\kappa\norm{\mu(N)}_{L^{\infty}(\Omega)}\norm{\sqrt{a(N)}}_{L^{\infty}(\Omega)}\\
&\quad+\norm{\mu^{\prime}(N)}_{L^{\infty}(\Omega)}\norm{\sqrt{a(N)}}_{L^{\infty}(\Omega)})\|\nabla C\|_{L^{\infty}(\Omega)}\left\|\sqrt{a(N)} \nabla \tilde{p}_{\varepsilon}\right\|_{L^2(\Omega)}\|\phi\|_{L^{2}(\Omega)}\\
&\quad+\alpha\left\|\tilde{q}_{\varepsilon}\right\|_{L^{2}(\Omega)}\|\phi\|_{L^{2}(\Omega)}
 +\gamma_N\|N-N_d\|_{L^2(\Omega)}\|\phi\|_{L^2(\Omega)}.
\end{split}
\end{equation*}
\noindent Using the estimates  \eqref{cv final  qeps}, \eqref{radical}, \eqref{radical epsi]}, Theorem \ref{th:existence} and the regularity results on $C$ (see \eqref{Cregularity}), we obtain
\begin{equation}
  \left\|\partial_{s} \tilde{p}_{\varepsilon}\right\|_{L^{q^{\prime}}(0,T;(H^{1}(\Omega))')} \leqslant B   
\end{equation}
\noindent for some $B \geqslant 0$ independent of $\varepsilon$ and $q^{\prime}=\frac{2p}{p+2}$. Hence,
up to subsequence, still denoted as $(\tilde{p}_{\varepsilon})_\varepsilon$,  we obtain
\begin{equation}\label{drive en tem p eps}
\frac{\partial \tilde{p}_{\varepsilon}}{\partial s} \rightharpoonup \frac{\partial\tilde{p}}{\partial s} \text { in } {L^{q^{\prime}}(0,T;(H^{1}(\Omega))')}.
\end{equation}



\noindent It remains to show that
\begin{equation}\label{hii}
    \xi=\sqrt{a(N)} \nabla \tilde{p} \mbox{ in }L^2(Q_T).
\end{equation}
Indeed, we consider  $\xi_{\varepsilon}^i=\sqrt{a(N)}\displaystyle \frac{\partial \tilde{p}_{\varepsilon}}{\partial x_i}, i=1, \ldots, N$ and we define 
 the open subset $Q_{T_\delta}:=\{(t,x)\in Q_{T}  ; a(N(t,x))>\delta\}$; $\delta >0$. Thus, $\xi_{\varepsilon}^i \rightharpoonup \xi^i$ weakly in $L^2(Q_{T_\delta})$. Moreover,  we have
$$
\frac{\partial \tilde{p}_{\varepsilon}}{\partial x_i}=\frac{1}{\sqrt{a(N)}} \xi_{\varepsilon}^i \rightharpoonup \frac{1}{\sqrt{a(N)}} \xi^i \text { weakly in } L^2\left(Q_{T\delta}\right) \text {, as } \varepsilon \rightarrow 0 .
$$
On the other hand, $\displaystyle\frac{\partial \tilde{p}_{\varepsilon}}{\partial x_i} \rightarrow \displaystyle\frac{\partial \tilde{p}}{\partial x_i}$ in the sense of distributions. We conclude that $\xi^i=$ $\sqrt{a(N)} \displaystyle\frac{\partial \tilde{p}}{\partial x_i}$ in $L^2(Q_{T_\delta})$ due to the uniqueness of the limit. Letting $\delta$ tend to $0$ and applying the dominated convergence theorem, we obtain $\xi^i=\sqrt{a(N)}\displaystyle \frac{\partial \tilde{p}}{\partial x_i}$ in $L^2(Q_T)$. 

\noindent With the above convergence, we now need to verify that the pair $(\tilde{p},\tilde{q})$ constitutes a weak solution to the modified degenerate adjoint problem \eqref{adjoint2}-\eqref{adj bord2}. For the sake of clarity, we write the  equality \eqref{phpepsi} in the following form: 
$$
K_1+K_2+K_3-K_4-K_5-K_6=0.
$$
Firstly, we take the test function $\varphi_1$ in  $ L^{\frac{2p}{p-2}}(0,T,H^1(\Omega))$ with  $p>2$. The weak convergence \eqref{drive en tem p eps} implies, when $\varepsilon$ goes to zero, that 
$$
K_{1} \longrightarrow \int_0^T < \frac{\partial \tilde{p}}{\partial s},\varphi_{1}> ds. 
$$ 
Next, 
\begin{equation*}
\begin{array}{ll}
\displaystyle K_2&=\displaystyle\iint_{Q_{T}}{ a(N) \nabla \tilde{p}_{\varepsilon} \cdot\nabla\varphi_1}\; d x d s+\iint_{Q_{T}}{\varepsilon \nabla \tilde{p}_{\varepsilon}\cdot \nabla \varphi_1}\; dx ds 
\medskip
\\&\displaystyle =\iint_{Q_{T}}{ \sqrt{a(N)} \nabla \tilde{p}_{\varepsilon}\cdot\left(\sqrt{a(N)} \nabla \varphi_1\right)}\; d x d s + \iint_{Q_{T}}{\sqrt{\varepsilon} \left(\sqrt{\varepsilon} \nabla \tilde{p}_{\varepsilon} \cdot\nabla \varphi_1\right)}\; dx ds.
\end{array}
\end{equation*}
Using \eqref{cv mhm}, \eqref{hii} and as $\sqrt{a(N)} \nabla \varphi_1 \in L^2(Q_T)$, we get 

$$\displaystyle \iint_{Q_{T}}{ \sqrt{a(N)} \nabla \tilde{p}_{\varepsilon}\cdot\left(\sqrt{a(N)}\nabla \varphi_1\right)}\; d x d s\underset{\varepsilon\to 0}{\longrightarrow} \iint_{Q_{T}}{ a(N) \nabla \tilde{p}\cdot \nabla  \varphi_1} \; dx ds \, .$$

\noindent \noindent
In addition to that, the assertion  \eqref{radical epsi]} leads to
$$
\sqrt{\varepsilon}\iint_{Q_{T}}{ \left(\sqrt{\varepsilon} \nabla \tilde{p}_{\varepsilon}\cdot \nabla \varphi_1\right)}\; dx ds  \underset{\varepsilon\to 0}{\longrightarrow}0 . 
$$ 

\noindent Therefore, when $\varepsilon$ tends to $0$,
$$\displaystyle K_{2} \longrightarrow \iint_{Q_{T}}{ a(N) \nabla \tilde{p}\cdot \nabla  \varphi_1} \; dx ds \, .$$
For the third term, and under the assumption that $a^\prime(N)\le \kappa\; a(N)$, we have 
\begin{equation*}
\begin{split}
    & \left| K_{3}-\iint_{Q_{T}}{ a'(N)\nabla N\cdot\nabla \tilde{p}\; \varphi_{1}}\; dx ds \right|\\
     &=\left|\iint_{Q_{T}}{ a'(N)\nabla N\cdot\nabla (\tilde{p}_{\varepsilon}-\tilde{p})\;\varphi_{1}}\; dx ds \right|
    \leqslant  \kappa \norm{\sqrt{a(N)}\nabla N}_{L^{\infty}(Q_{T})}\iint_{Q_{T}}{ \abs{ \sqrt{a(N)}\nabla (\tilde{p}_{\varepsilon}-\tilde{p})\;\varphi_{1}}\; dx ds\,.} 
    \end{split}
\end{equation*}
Using \eqref{cv mhm}, \eqref{regul gradient} and \eqref{hii}, the right-hand side term goes to $0$. Then, 
$$ 
K_{3} \underset{\varepsilon\to 0}{\longrightarrow} \iint_{Q_{T}}{ a'(N)\nabla N\cdot\nabla\tilde{p}\; \varphi_{1}}\; dx ds\, .
$$

\noindent Using similar guidelines, one has  $$\displaystyle K_{4} \underset{\varepsilon\to 0}{\longrightarrow} \iint_{Q_{T}}{ \chi'(N) \nabla\tilde{p}\cdot \nabla  \varphi_1} \; dx ds .$$

\noindent Finally and due to $\eqref{cv final  qeps}$, $$ K_{5} \longrightarrow  \iint_{Q_{T}}{  \alpha \ \tilde{q} \ \varphi_1}  \; dx dt  ,\mbox{ as } \varepsilon \rightarrow 0 \, .$$ 

\noindent Furthermore, the equation \eqref{qhq} can be written as $V_1+V_2-V_3+V_4-V_5-V_6=0$. Let $\varphi_2$ in  $ L^{2}(0,T,H^1(\Omega))$. Then, using \eqref{cv final  qeps} and \eqref{cv final  qeps}, the convergence of each term, when $\varepsilon$ tends to $0$, is given as follow:
$$ V_{1} \longrightarrow \int_0^T < \frac{\partial \tilde{q}}{\partial s},\varphi_{2}> ds\, .$$ 
$$ V_{2} \longrightarrow  \iint_{Q_{T}}{  \nabla \tilde{q} \cdot\nabla\varphi_2}  \; dx ds \, .$$
\noindent For the third term $V_{3}$, we can follow the same proof as $K_{3}$ to obtain 
  $$ V_{3} \longrightarrow   \iint_{Q_{T}}{  \chi(N) \nabla \tilde{p}\cdot \nabla \varphi_2}  \; dx ds \, .$$
Next, when $\varepsilon$ tends to $0$,
$ \displaystyle V_{4} \longrightarrow  \iint_{Q_{T}}{  \beta\ \tilde{q}\ \varphi_2}  \; dx ds\, $ 
and
$\displaystyle V_{5} \longrightarrow  \int_0^T\!\!\!\! \int_{\Omega_{c}}{  f\ \tilde{q} \ \varphi_2}  \; dx ds\, . $

\noindent Hence, the existence of a solution to \eqref{php}-\eqref{qhq} is well achieved. 

\subsubsection{Convergence under  the main assumption \eqref{aa}}
In this subsection, we consider that the function $a(N)$ satisfies the hypothesis \eqref{aa}. Here, the difficulty is to ensure the convergence of the sequence $(\sqrt{a(N)}\nabla \tilde{p}_\varepsilon)_\varepsilon$. The estimate \eqref{radical} guarantees the weak convergence to a function $\xi$ but the identification of the limit is not obvious, as in the previous case. For that, 
we seek a weaker alternative variational formulation with more regular test functions and with further assumptions on the cell density $N$. More precisely,  additional regularity assumptions are given as follows: 
\begin{equation}\label{a(N)}
\hspace{-6.5em} a(N), \chi(N) \in  L^{2}(0,T;H^{2}( \Omega)) \, .
\end{equation}
Next, we define $ \tilde{A}(N):= \displaystyle \int_0^N a'(r) dr.$ Now, let us introduce the following weak problem:

\begin{definition}\label{def adj2}
A pair  $(\tilde{p},\tilde{q})$ is a {\it{\bf weak solution}} of \eqref{adjoint2}-\eqref{adj bord2} if
\begin{align*}
&\tilde{p} \in L^{\infty}(0,T;L^{2}( \Omega)) , \, \quad \tilde{q}\in L^{\infty}(0,T;L^{2}( \Omega)) \cap L^{2}(0,T;H^{1}( \Omega)) , \\
&\partial_{s}\tilde{p}\in L^{\frac{2p}{p+2}}(0,T;(H^{1}( \Omega))^{\prime}), \quad \partial_{s}\tilde{q}\in L^{2}(0,T;(H^{1}( \Omega))^{\prime}),
\end{align*}
and $(\tilde{p},\tilde{q}$) satisfy
\begin{equation}\label{php2}
\begin{split}
  &\displaystyle{ \int_{0}^T {<\partial_{s}\tilde{p}, \varphi_1>}\; dx ds - \iint_{Q_{T}}{ \tilde{p}\ \big( \nabla a(N) \cdot \nabla \varphi_1+a(N) \Delta \varphi_1\  \big)} \; dx ds-\iint_{Q_{T}}{ \tilde{p} \nabla \varphi_1 \cdot \nabla \tilde{A}} \; dx ds}\\
   &\displaystyle{ -\iint_{Q_{T}}{ \tilde{p}\  \varphi_1  \Delta\tilde{A}} \; dx ds+ \iint_{Q_{T}}{ \tilde{p} \big(\  \nabla (\chi'(N) \varphi_1)\cdot \nabla C +\chi'(N) \varphi_1 \Delta C \ \big)} \; dx ds}\\
   &\displaystyle{- \iint_{Q_{T}}{ \alpha\ \tilde{q} \varphi_1 } \; dx ds- \iint_{Q_{T}}{ \gamma_N(N-N_d)\varphi_1 } \; dx ds=   0}\, , 
\end{split}
\end{equation}
\begin{equation}\label{qhq2}
\begin{split}
& \int_{0}^T {\partial_{t}\tilde{q}, \varphi_2}\; dx ds+ \iint_{Q_{T}}{\nabla \tilde{q}\cdot \nabla \varphi_2} \; dx ds+\iint_{Q_{T}}{\tilde{p}\ \big( \nabla \chi(N) \cdot \nabla \varphi_2+\chi(N) \Delta \varphi_2\  \big)}\; dxds+\iint_{Q_{T}}{ \beta \tilde{q} \varphi_2 } \; dx ds\,\\
& 
 -\iint_{Q_{T}}{\gamma_C(C-C_d) \varphi_2 } \; dx ds\,=0 \, ,   
\end{split}
\end{equation}
for all $\varphi_{1} \in C^{0}(0,T;H^{2}(\Omega))$ and $\varphi_{2}  \in L^{2}(0,T;H^{2}(\Omega))$ .
\end{definition}
\noindent The main result of this subsection is therefore provided through the following theorem:

\begin{theoreme}\label{th exst adj2}
Assume that $ a'(N)\leqslant \kappa\, a(N)$; $\kappa>0$, that there exists a function $\mu \in \mathcal{C}^{2}\left([0,1] ; \mathbb{R}^{+}\right)$;  $\mu(u)=\frac{\chi(u)}{a(u)}$, $\forall u \in(0,1)$ and $\mu(0)=\mu(1)=0$ and  that    \eqref{regul gradient} and \eqref{a(N)} hold. Moreover, let (N, C) be a
weak solution of system \eqref{CNS}-\eqref{ci}. Therefore, if  the hypotheses of Theorem \ref{th:existence} hold, then there exists a  solution to the modified adjoint system \eqref{adjoint2}-\eqref{adj bord2} 
 in the sense of Definition \ref{def adj2}.
\end{theoreme}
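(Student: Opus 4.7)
The plan mirrors the proof of Theorem \ref{th exst adj}: start from the non-degenerate solution $(\tilde p_\varepsilon,\tilde q_\varepsilon)$ of \eqref{adj non dege}-\eqref{adj non bord } supplied by Proposition \ref{prop adj exis}, reuse the uniform-in-$\varepsilon$ a priori estimates derived in Subsection \ref{subfinit} (none of which actually invokes \eqref{nombre fini}), and extract a subsequence with $\tilde p_\varepsilon\rightharpoonup\tilde p$ weak-$*$ in $L^\infty(0,T;L^2(\Omega))$, $\tilde q_\varepsilon\rightharpoonup\tilde q$ weakly in $L^2(0,T;H^1(\Omega))$, $\sqrt{a(N)}\nabla\tilde p_\varepsilon\rightharpoonup\xi$ and $\sqrt{\varepsilon}\nabla\tilde p_\varepsilon\rightharpoonup 0$ weakly in $L^2(Q_T)$, together with the weak convergence of the time derivatives in the dual spaces as in \eqref{drive en tem q eps}-\eqref{drive en tem p eps}.

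The essential obstruction is that under \eqref{aa} the set $\{a(N)=0\}$ may have positive Lebesgue measure, so the identification $\xi=\sqrt{a(N)}\nabla\tilde p$ carried out in Subsection \ref{subfinit} collapses: on $\{a(N)=0\}$ the object $\nabla\tilde p$ is not even defined, since a priori we only know $\tilde p\in L^\infty(L^2)$. The remedy, which motivates Definition \ref{def adj2}, is to integrate by parts in \eqref{phpepsi}-\eqref{qhqepsi} so that every spatial derivative of $\tilde p_\varepsilon$ is transferred onto the test function, after which only the weak $L^2$-convergence of $\tilde p_\varepsilon$ itself will be needed.

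Concretely, using $a'(N)\nabla N=\nabla\tilde A(N)$ and two rounds of integration by parts, I would recast
\begin{equation*}
\iint_{Q_T}\!\bigl[a_\varepsilon(N)\nabla\tilde p_\varepsilon\!\cdot\!\nabla\varphi_1+a'(N)\nabla N\!\cdot\!\nabla\tilde p_\varepsilon\,\varphi_1-\chi'(N)\nabla C\!\cdot\!\nabla\tilde p_\varepsilon\,\varphi_1\bigr]\,dx\,ds
\end{equation*}
as $-\iint\tilde p_\varepsilon(\nabla a(N)\!\cdot\!\nabla\varphi_1+a(N)\Delta\varphi_1)-\iint\tilde p_\varepsilon(\nabla\varphi_1\!\cdot\!\nabla\tilde A+\varphi_1\Delta\tilde A)+\iint\tilde p_\varepsilon(\nabla(\chi'(N)\varphi_1)\!\cdot\!\nabla C+\chi'(N)\varphi_1\Delta C)$, plus a residual $\varepsilon\iint\tilde p_\varepsilon\Delta\varphi_1$ that vanishes with $\varepsilon$. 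The symmetric manipulation of $\iint\chi(N)\nabla\tilde p_\varepsilon\!\cdot\!\nabla\varphi_2$ in \eqref{qhqepsi} produces $-\iint\tilde p_\varepsilon(\nabla\chi(N)\!\cdot\!\nabla\varphi_2+\chi(N)\Delta\varphi_2)$. Choosing $\varphi_1\in C^0(0,T;H^2(\Omega))$, $\varphi_2\in L^2(0,T;H^2(\Omega))$ with $\nabla\varphi_i\!\cdot\!\eta=0$ on $\partial\Omega$ guarantees that all boundary integrals cancel against the Neumann-type flux conditions in \eqref{adj non bord }. Each resulting bulk integral then has the shape $\iint_{Q_T}\tilde p_\varepsilon\,G$ (or $\iint\tilde q_\varepsilon\,H$, $\iint\nabla\tilde q_\varepsilon\!\cdot\!\nabla\varphi_2$) with $G,H$ fixed in $L^2(Q_T)$, and passes to the limit by the weak convergences already extracted; the time-derivative pairings converge by duality.

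The principal technical obstacle is securing the $L^2(Q_T)$-integrability of the coefficients $G$: the terms $\nabla a(N)$, $\Delta a(N)=\Delta\tilde A(N)$, $\nabla\chi(N)$ and $\Delta\chi(N)$ are exactly what hypothesis \eqref{a(N)} delivers; the more delicate factors $\chi'(N)\varphi_1\Delta C$ and $\nabla(\chi'(N)\varphi_1)\!\cdot\!\nabla C$ force us to invoke the parabolic $W^{2,p}$-regularity of $C$ (as already used in \eqref{Cregularity}) to place $\Delta C\in L^p(0,T;L^2(\Omega))$ and $\nabla C\in L^p(0,T;L^\infty(\Omega))$, combined with the $C^2$-smoothness of $\mu$ and \eqref{regul gradient} to control the composition terms. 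Once these integrability points are checked, limit passage is a term-by-term book-keeping exercise entirely analogous to the end of Subsection \ref{subfinit}, and the resulting pair $(\tilde p,\tilde q)$ satisfies \eqref{php2}-\eqref{qhq2} in the sense of Definition \ref{def adj2}.
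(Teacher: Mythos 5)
Your proposal follows essentially the same route as the paper: keep the uniform-in-$\varepsilon$ estimates and weak convergences from Subsection \ref{subfinit}, rewrite \eqref{phpepsi}--\eqref{qhqepsi} by transferring all spatial derivatives of $\tilde p_\varepsilon$ onto $H^2$-regular test functions (which is exactly what Definition \ref{def adj2} encodes), and pass to the limit term by term using only the weak $L^2$ convergence of $\tilde p_\varepsilon$ together with \eqref{a(N)} and the parabolic regularity of $C$ to place the coefficients in $L^2(Q_T)$. Your explicit remark that the test functions should satisfy $\nabla\varphi_i\cdot\eta=0$ on $\partial\Omega$ so the boundary terms from the double integration by parts vanish is a point the paper leaves implicit, but otherwise the two arguments coincide.
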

%
%
\begin{proof}
\noindent We consider initially the problem \eqref{phpepsi}-\eqref{qhqepsi}, which has at least one solution ($p_\varepsilon,q_\varepsilon)$, but with more regular test functions. Thus, $(p_\varepsilon,q_\varepsilon)$ satisfies the following weak non-degenerate problem:\\ 

\noindent Find $(\tilde{p}_{\varepsilon},\tilde{q}_\varepsilon )  \in L^{\infty}(0,T;L^{2}( \Omega)) \times  \left(L^{\infty}(0,T;L^{2}( \Omega)) \cap L^{2}(0,T;H^{1}( \Omega))\right)$ such that for all $\varphi_{1} \in C^{0}(0,T;H^{2}(\Omega))$ and  $\varphi_{2}  \in L^{2}(0,T;H^{2}(\Omega))$, we have:
\begin{equation}\label{phpepsi2}
\begin{array}{ll}
\medskip
\displaystyle{ \int_{0}^T {<\partial_{s}\tilde{p}_\varepsilon, \varphi_1>}\; dx ds - \iint_{Q_{T}}{ \tilde{p}_\varepsilon\ \big( \nabla a_\varepsilon(N) \cdot \nabla \varphi_1+a_\varepsilon(N) \Delta \varphi_1\  \big)} \; dx ds-\iint_{Q_{T}}{ \tilde{p}_\varepsilon \nabla \varphi_1 \cdot \nabla \tilde{A}} \; dx ds}\\
\medskip
\displaystyle{ -\iint_{Q_{T}}{ \tilde{p}_\varepsilon\  \varphi_1  \Delta\tilde{A}} \; dx ds+ \iint_{Q_{T}}{ \tilde{p}_\varepsilon \big(\  \nabla (\chi'(N) \varphi_1)\cdot \nabla C +\chi'(N) \varphi_1 \Delta C \ \big)} \; dx ds}\\

\displaystyle{- \iint_{Q_{T}}{ \alpha\ \tilde{q}_\varepsilon \varphi_1 } \; dx ds- \iint_{Q_{T}}{ \gamma_N(N-N_d)\varphi_1 } \; dx ds=   0} , 
\end{array}
\end{equation}

\begin{equation}\label{qhqepsi2}
\begin{array}{ll}
\medskip
\displaystyle\int_{0}^T {<\partial_{s}\tilde{q}_\varepsilon, \varphi_2>}\; dx ds+ \iint_{Q_{T}}{\nabla \tilde{q}_\varepsilon\cdot \nabla \varphi_2} \; dx ds+\iint_{Q_{T}}{ \tilde{p}_\varepsilon\ \big( \nabla \chi(N) \cdot \nabla \varphi_2+\chi(N) \Delta \varphi_2\  \big)\; dxds}\\
\displaystyle+\iint_{Q_{T}}{ \beta \tilde{q}_\varepsilon \varphi_2 } \; dx ds\,
\displaystyle -\iint_{Q_{T}}{\gamma_C(C-C_d) \varphi_2 } \; dx ds\,=0 ,  
\end{array}
\end{equation}
\\


\noindent We recall that the solution $(\tilde{p}_{\varepsilon},\tilde{q}_\varepsilon )$ satisfies  the convergences properties \eqref{cv p esp}-\eqref{cv mhm}, \eqref{drive en tem q eps} and \eqref{drive en tem p eps}. Consequently, under the assumptions \eqref{regul gradient} and \eqref{a(N)}, transitioning to the limit becomes straightforward as follows:\\


\noindent First, we write the equality \eqref{phpepsi2} as $K_1 - K_2 - K_3 - K_4 + K_5 - K_6 - K_7 = 0$. Then, assertion \eqref{drive en tem p eps} implies that  
$$ K_{1} \longrightarrow \int_0^T < \frac{\partial \tilde{p}}{\partial t},\varphi_{1}> ds, \mbox{ as } \varepsilon \mbox{ tends to zero. }$$ 

\noindent For  the second term, we have
\begin{equation*}
\begin{split}
 &\abs{K_2-{ \iint_{Q_{T}}{ \tilde{p}\big( \nabla a(N) \cdot \nabla \varphi_1+a(N) \Delta \varphi_1 \big)}} \; dx ds}\\
 &=\abs{\iint_{Q_{T}}{ \tilde{p}_{\varepsilon}\big( \nabla a_{\varepsilon}(N) \cdot \nabla \varphi_1+a_{\varepsilon}(N) \Delta \varphi_1 \big)} \; dx ds-{ \iint_{Q_{T}}{ \tilde{p}\big( \nabla a(N) \cdot \nabla \varphi_1+a(N) \Delta \varphi_1 \big)}} \; dx dt}\\
&\leq \iint_{Q_{T}}{ \abs{(p_{\varepsilon}-p) \nabla a(N) \cdot \nabla \varphi_1}}  dx dt+\norm{a(N)}_{L^{\infty}}\iint_{Q_{T}}{\abs{(\tilde{p}_{\varepsilon}-\tilde{p})  \Delta \varphi_1}}  dx dt+\iint_{Q_{T}}{ \abs{ \varepsilon \tilde{p}_{\varepsilon} \Delta \varphi_1}}dx dt \, .
\end{split}
\end{equation*}

\noindent Using the assumptions \eqref{regul gradient}, \eqref{aa}, and the regularity of the test function, we have $\nabla a(N)\cdot\nabla \varphi \in L^2(Q_{T})$. Consequently, by using \eqref{cv p esp} and \eqref{radical epsi]}, we conclude that

$$\abs{K_2-{ \iint_{Q_{T}}{ \tilde{p}\big( \nabla a(N) \cdot \nabla \varphi_1+a(N) \Delta \varphi_1 \big)}} \; dx ds}\underset{\varepsilon\to 0}{\longrightarrow} 0 \, .$$

\noindent Next, using the assumption \eqref{a(N)} and the regularity of the test function $\varphi_1$,
we have $\nabla \varphi_{1}\cdot\nabla \tilde{A}\in L^{2}(Q_{T})$. Therefore, by \eqref{cv p esp},
 $$\abs{K_3-\iint_{Q_{T}}{  \tilde{p} \nabla \varphi_1 \cdot  \nabla \tilde{A}} \; dx ds}\underset{\varepsilon\to 0}\longrightarrow 0 \, .$$ 

\noindent For the fourth term, using the Sobolev injection 
and the assumption \eqref{a(N)}, we get $\varphi_1 \Delta\tilde{A}\in L^2(Q_{T})$. Then
 $$ K_{4} \longrightarrow \iint_{Q_{T}}{  \tilde{p}\; \varphi_1 \Delta\tilde{A}} \; dx ds \, , \mbox{ as $\varepsilon$ tends to $0$\, .}$$

\noindent Similarly, using assumptions \eqref{a(N)}, Theorem \ref{th:existence} and the regularity of the test function, we can conclude that 
$${K5\longrightarrow \iint_{Q_{T}}{ \tilde{p}\big(\  \nabla (\chi'(N) \varphi_1)\cdot \nabla C+\chi'(N) \varphi_1 \Delta C \ \big)} \; dx ds} \, , \mbox{ as $\varepsilon$ tends to $0$\, .}$$

\noindent Then,  using \eqref{cv final  qeps}, 
$$\hspace{-7em} K_{6} \longrightarrow  \iint_{Q_{T}}{  \alpha \ \tilde{q} \ \varphi_1}  \; dx ds \, , \mbox{ as $\varepsilon$ tends to $0$\, .}$$ 

\noindent Finally, we conclude that the  weak system \eqref{phpepsi2}-\eqref{qhqepsi2} has at least one solution.

\end{proof}

\section{Conclusion}

In this paper, a controlled chemotaxis model, consisting of parabolic-parabolic equations with degenerate diffusion and a control element representing the concentration coefficient, has been explored. The main challenges arise from the significant degeneracy of the diffusive term, which impacts the existence of weak solutions for both the direct and adjoint problems, as well as the consistent attainment of the maximum principle. 
Due to the wide range of applications in biology and the lack of weak solutions investigations in recent related papers, this study has provided rigorous answers to various theoretical problems. The well-posedness of the direct problem, the first-order optimality conditions, and the existence of weak solutions to the dual problem have been well-detailed.  The discretization of the direct and dual problems is one of the perspectives of this paper. The numerical study will be the cornerstone of many algorithms that solve inverse problems. To sum up,  this comprehensive approach will provide deeper insights into the system's dynamics and facilitate practical applications across various fields.

\end{quotation}

\bibliographystyle{abbrv}

\begin{thebibliography}{10}

\bibitem{andreianov2011finite}
B.~Andreianov, M.~Bendahmane, and M.~Saad.
\newblock Finite volume methods for degenerate chemotaxis model.
\newblock {\em Journal of computational and applied mathematics},
  235(14):4015--4031, 2011.

\bibitem{bendahmane2005analysis}
M.~Bendahmane and K.~H. Karlsen.
\newblock Analysis of a class of degenerate reaction-diffusion systems and the
  bidomain model of cardiac tissue.
\newblock {\em Preprint series. Pure mathematics http://urn. nb. no/URN: NBN:
  no-8076}, 2005.

\bibitem{bendahmane2009convergence}
M.~Bendahmane and K.~H. Karlsen.
\newblock Convergence of a finite volume scheme for the bidomain model of
  cardiac tissue.
\newblock {\em Applied numerical mathematics}, 59(9):2266--2284, 2009.

\bibitem{bendahmane2007two}
M.~Bendahmane, K.~H. Karlsen, and J.~M. Urbano.
\newblock On a two-sidedly degenerate chemotaxis model with volume-filling
  effect.
\newblock {\em Mathematical Models and Methods in Applied Sciences},
  17(05):783--804, 2007.

\bibitem{braz2022bilinear}
P.~Braz~e Silva, F.~Guill{\'e}n-Gonz{\'a}lez, C.~Perusato, and
  M.~Rodr{\'\i}guez-Bellido.
\newblock Bilinear optimal control for weak solutions of the keller-segel
  logistic model in $2 d $ domains.
\newblock {\em arXiv e-prints}, pages arXiv--2206, 2022.

\bibitem{chamoun2014coupled}
G.~Chamoun, M.~Saad, and R.~Talhouk.
\newblock A coupled anisotropic chemotaxis-fluid model: The case of two-sidedly
  degenerate diffusion.
\newblock {\em Computers \& Mathematics with Applications}, 68(9):1052--1070,
  2014.

\bibitem{chamoun2014monotone}
G.~Chamoun, M.~Saad, and R.~Talhouk.
\newblock Monotone combined edge finite volume--finite element scheme for
  anisotropic keller--segel model.
\newblock {\em Numerical Methods for Partial Differential Equations},
  30(3):1030--1065, 2014.

\bibitem{colli2017optimal}
P.~Colli, G.~Gilardi, E.~Rocca, and J.~Sprekels.
\newblock Optimal distributed control of a diffuse interface model of tumor
  growth.
\newblock {\em Nonlinearity}, 30(6):2518, 2017.

\bibitem{dai2019optimal}
F.~Dai and B.~Liu.
\newblock Optimal control and pattern formation for a haptotaxis model of solid
  tumor invasion.
\newblock {\em Journal of the Franklin Institute}, 356(16):9364--9406, 2019.

\bibitem{guillen2020optimal}
F.~Guill{\'e}n-Gonz{\'a}lez, E.~Mallea-Zepeda, and M.~{\'A}.
  Rodr{\'\i}guez-Bellido.
\newblock Optimal bilinear control problem related to a chemo-repulsion system
  in 2d domains.
\newblock {\em ESAIM: Control, Optimisation and Calculus of Variations}, 26:29,
  2020.

\bibitem{guillen2020bi}
F.~Guill{\'e}n-Gonz{\'a}lez, E.~Mallea-Zepeda, and {\'E}.~J. Villamizar-Roa.
\newblock On a bi-dimensional chemo-repulsion model with nonlinear production
  and a related optimal control problem.
\newblock {\em Acta Applicandae Mathematicae}, 170(1):963--979, 2020.

\bibitem{hillen2001global}
T.~Hillen and K.~Painter.
\newblock Global existence for a parabolic chemotaxis model with prevention of
  overcrowding.
\newblock {\em Advances in Applied Mathematics}, 26(4):280--301, 2001.

\bibitem{ibrahim2014efficacy}
M.~Ibrahim and M.~Saad.
\newblock On the efficacy of a control volume finite element method for the
  capture of patterns for a volume-filling chemotaxis model.
\newblock {\em Computers \& Mathematics with Applications}, 68(9):1032--1051,
  2014.

\bibitem{keller1970initiation}
E.~F. Keller and L.~A. Segel.
\newblock Initiation of slime mold aggregation viewed as an instability.
\newblock {\em Journal of theoretical biology}, 26(3):399--415, 1970.

\bibitem{ladyzhenskaia1968linear}
O.~A. Ladyzhenskaia, V.~A. Solonnikov, and N.~N. Ural'tseva.
\newblock {\em Linear and quasi-linear equations of parabolic type}, volume~23.
\newblock American Mathematical Soc., 1968.

\bibitem{laurenccot2005chemotaxis}
P.~Lauren{\c{c}}ot and D.~Wrzosek.
\newblock A chemotaxis model with threshold density and degenerate diffusion.
\newblock {\em Nonlinear Elliptic and Parabolic Problems: A Special Tribute to
  the Work of Herbert Amann}, pages 273--290, 2005.

\bibitem{liu2022optimal}
C.~Liu and Y.~Yuan.
\newblock Optimal control of a fully parabolic attraction-repulsion chemotaxis
  model with logistic source in 2d.
\newblock {\em Applied Mathematics \& Optimization}, 85(1):7, 2022.

\bibitem{lopez2021optimal}
J.~L{\'o}pez-R{\'\i}os and {\'E}.~J. Villamizar-Roa.
\newblock An optimal control problem related to a 3d-chemotaxis-navier-stokes
  model.
\newblock {\em ESAIM: Control, Optimisation and Calculus of Variations}, 27:58,
  2021.

\bibitem{marpeau2006mathematical}
F.~Marpeau and M.~Saad.
\newblock Mathematical analysis of radionuclides displacement in porous media
  with nonlinear adsorption.
\newblock {\em Journal of Differential Equations}, 228(2):412--439, 2006.

\bibitem{sowndarrajan2018optimal}
P.~Sowndarrajan and L.~Shangerganesh.
\newblock Optimal control problem for cancer invasion parabolic system with
  nonlinear diffusion.
\newblock {\em Optimization}, 67(10):1819--1836, 2018.

\bibitem{tuval2005bacterial}
I.~Tuval, L.~Cisneros, C.~Dombrowski, C.~W. Wolgemuth, J.~O. Kessler, and R.~E.
  Goldstein.
\newblock Bacterial swimming and oxygen transport near contact lines.
\newblock {\em Proceedings of the National Academy of Sciences},
  102(7):2277--2282, 2005.

\end{thebibliography}

\end{document}